\renewcommand{\L}{\mathcal{L}}
\renewcommand{\P}{\mathcal{P}}
\newcommand{\R}{\mathbb{R}}
\newtheorem{theorem}{Theorem}
\newtheorem{defi}[theorem]{Definition}
\newtheorem{thm}[theorem]{Theorem}
\newtheorem{rmk}[theorem]{Remark}
\newtheorem{lemma}[theorem]{Lemma}
\newtheorem{coroll}[theorem]{Corollary}
\newtheorem{prop}[theorem]{Proposition}
\newtheorem{notat}[theorem]{Notation}
\newcommand{\EE}{\mathbb{E}}
\newcommand{\RR}{\mathbb{R}}
\newcommand{\PP}{\mathbb{P}}
\newcommand{\bb}{\color{black}}
\newcommand{\dd}{\color{black}}
\newcommand{\Hpi}{H_{\phi}^{-1}}
\newcommand{\vpe}{\varphi_{\epsilon}}
\newcommand{\indiq}{{\bf 1}}
\newcommand{\e}{\epsilon}
\newcommand\blfootnote[1]{%
  \begingroup
  \renewcommand\thefootnote{}\footnote{#1}%
  \addtocounter{footnote}{-1}%
  \endgroup
}
\title{On subexponential convergence to equilibrium of Markov processes}
\author{Armand Bernou}
\address{Sorbonne Universit\'e, CNRS, Laboratoire de Probabilit\'e, Statistique et Mod\'elisation, F-75005 Paris, France.}
\email{armand.bernou@sorbonne-universite.fr}
\begin{document}

\blfootnote{\textit{2020 Mathematics Subject Classification: } Primary 60J25, 37A25.}

\blfootnote{\textit{Key words and phrases:} Subgeometric ergodicity, strong Markov processes, Foster-Lyapunov criteria.}

\begin{abstract}
Studying the subexponential convergence towards equilibrium of a strong Markov process,
we exhibit an intermediate Lyapunov condition equivalent to the control of some moment of a hitting time.
This provides a link, similar (although more intricate) to the one existing in the exponential case, between
the coupling method and the approach based on the existence of a Lyapunov function for the generator,
in the context of the subexponential rates found by \cite{FortRoberts}, \cite{DFG} and \cite{Hairer}.
\end{abstract} 

\maketitle

\textbf{Acknowledgements:} I would like to acknowledge Nicolas Fournier (LPSM, Sorbonne Université) for  all  the  fruitful  discussions  and  advices  he offered me while preparing this note. The author warmly thanks the anonymous referee for their suggestions and remarks. This work was supported by grants from R\'egion \^Ile de France.

\section{Introduction}

\subsection{Context and main result}
The study of the convergence towards an invariant measure of continuous-time Markov processes has generated a large literature devoted to the geometric case (also referred to as the exponential case).
Meyn and Tweedie and coauthors \cite{Meyn93b, MTContinuous2, DownMT} developed stability
concepts for continuous-time Markov processes along with simple criteria for non-explosion, Harris-recurrence, positive Harris-recurrence, ergodicity and geometric
convergence to equilibrium. When applying those stability concepts, the key question of the existence of verifiable conditions emerges. In the discrete-time context, development of Foster-Lyapunov-type conditions on the transition kernel has provided such criteria. In the continuous-time context, Foster-Lyapunov inequalities applied to the (extended)
generator of the process play the same role. 
One of the key results of this theory is the equivalence of two conditions, both implying an exponential convergence towards equilibrium: the control of the moment of the hitting time of a set with appropriate properties, which can be seen as the conditions necessary to apply a coupling method, and the existence of some test function satisfying a Foster-Lyapunov inequality with respect to the generator.
Loosely speaking, considering a topological space $E$ and a $E$-valued strong Markov process $(X_t)_{t \geq 0}$,
with semigroup $(\P_t)_{t \geq 0}$,
invariant probability distribution $\pi$ and with appropriate properties (irreducibility,
non-explosion and aperiodicity, see Section \ref{Setting} for precise definitions), we have the following result.
Roughly, a set $C \in \mathcal{B}(E)$, where we write $\mathcal{B}(A)$ for the Borel $\sigma$-algebra on the topological space $A$, is said to be {\it petite} if there is a probability measure $a$ on
$\mathcal{B}(\RR_+)$ and a non-trivial measure $\nu$ on $\mathcal{B}(E)$ such that
$ \forall x \in C, \int_0^{\infty} \P_t(x,\cdot) a(dt) \geq \nu(\cdot)$.

\begin{thm}[Exponential case, \cite{SurveyMeynTweedie}]
\label{ThmExpo}
Assume that $(X_t)_{t \geq 0}$ is non-explosive, irreducible,
and aperiodic. Then 
the following conditions are equivalent.
\begin{enumerate}
\item[1.] There exist a closed petite set $C \in \mathcal{B}(E)$ and some constants
$\delta > 0$ and $\kappa > 1$ such that, setting
\[ \tau_C(\delta) = \inf \{t > \delta, X_t \in C\}, \]
we have
\begin{align}
\label{ConditionMomentExpo}
\sup_{x \in C} \EE_x[\kappa^{\tau_C(\delta)}] < \infty. 
\end{align}
\item[2.] There exist a closed petite set $C \in \mathcal{B}(E)$, some constants $b>0$,
$\beta>0$ and $V: E \to [1, \infty]$ finite at some $x_0 \in E$ such that, in the sense of Notation \ref{NotatLf},
\begin{align}
\label{ConditionLyapunovExpo2}
\mathcal{L} V \leq - \beta V + b \mathbf{1}_C.
\end{align}
\end{enumerate} 
Any of those conditions implies that the set $S_V = \{x: V(x) < \infty\}$ is absorbing and full (see Section \ref{Setting} for the precise definitions) for any $V$ solution of (\ref{ConditionLyapunovExpo2}), and that there exists $\rho < 1$ and $d>0$
such that for all $x \in E$,
$$ \|\P_t(x,\cdot) - \pi(\cdot)\|_{TV} \leq d V(x) \rho^t. $$
\end{thm}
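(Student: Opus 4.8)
The plan is to prove the equivalence $1\Leftrightarrow 2$ through the two separate implications, and then to obtain the quantitative bound — most naturally from condition 1 by a regeneration/coupling argument, condition 2 giving it via $2\Rightarrow 1$.

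\textbf{From 2 to 1.} Starting from $\mathcal{L}V\le-\beta V+b\mathbf{1}_C$, I would fix $\beta'\in(0,\beta)$, put $\kappa=e^{\beta'}$, and replace $C$ by a sublevel set $C'=\{V\le d\}$ with $d\ge b/(\beta-\beta')$; then $\mathcal{L}V\le-\beta'V$ on $(C')^c$, and — a standard but not wholly trivial point, which I would import from the discrete-time theory applied to the resolvent ($\delta$-skeleton) chain, using irreducibility and aperiodicity — $C'$ is petite. Since the process is non-explosive, Dynkin's formula applies, so $e^{\beta'(t\wedge\sigma_{C'})}V(X_{t\wedge\sigma_{C'}})$, with $\sigma_{C'}=\inf\{t\ge0:X_t\in C'\}$, is a nonnegative local supermartingale; optional stopping along a localising sequence and monotone convergence then give $g(y):=\EE_y[\kappa^{\sigma_{C'}}]\le V(y)$ for $y\in(C')^c$, and, since a nonnegative supermartingale converges a.s.\ while $V\ge1$, also $\sigma_{C'}<\infty$ $\PP_y$-a.s.\ whenever $V(y)<\infty$. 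As $g\equiv1$ on $C'$ we get $g\le V$ everywhere, while Gronwall's lemma applied to $t\mapsto\CP_tV(x)$ via $\mathcal{L}V\le-\beta V+b$ gives $\CP_tV\le e^{-\beta t}V+b/\beta$. It remains to use the strong Markov property at time $\delta$ (together with a routine choice of the level $d$): $\tau_{C'}(\delta)=\delta+\sigma_{C'}\circ\theta_\delta$ a.s., hence $\EE_x[\kappa^{\tau_{C'}(\delta)}]=\kappa^\delta\CP_\delta g(x)\le\kappa^\delta\CP_\delta V(x)\le\kappa^\delta(d+b/\beta)$ for $x\in C'$, which is (\ref{ConditionMomentExpo}) for the set $C'$.

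\textbf{From 1 to 2.} Here I would build $V$ from the controlled exponential moment, taking a time-averaged version such as $V(x)=1+\EE_x\big[\int_0^{\tau_C(\delta)}\kappa^{s}\,ds\big]$ — equivalently, a resolvent applied to an exponential-moment function of the $\delta$-skeleton chain — the averaging being what places $V$ in the domain of the extended generator. The strong Markov property makes $s\mapsto\kappa^{-s}V(X_s)$ a local supermartingale off $C$, i.e.\ $\mathcal{L}V\le-\beta V$ on $C^c$ for any $\beta<\log\kappa$, while on $C$ one combines the strong Markov property at time $\delta$, the hypothesis $\sup_{x\in C}\EE_x[\kappa^{\tau_C(\delta)}]<\infty$ and a crude bound on $\CP_\delta V$ over $C$ to find a finite $b$ with $\mathcal{L}V\le-\beta V+b\mathbf{1}_C$; finiteness of $V$ at some $x_0$ is immediate (take $x_0\in C$). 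I expect the main obstacle to be exactly this step: transferring the discrete-time geometric-drift inequality, which the Meyn--Tweedie theory yields for the skeleton/resolvent chain, into a genuine continuous-time Foster--Lyapunov inequality for the extended generator. This is also where the $\delta$-delay — which stops $\tau_C(\delta)$ from collapsing to $0$ when one starts inside $C$ — and non-explosion are needed.

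\textbf{Ergodicity and the absorbing/full statement.} From condition 1, $C$ being petite one may, using aperiodicity, upgrade the sampled minorisation to a one-time bound $\CP_{t_0}(x,\cdot)\ge\epsilon\,\nu(\cdot)$ for $x\in C$; a Nummelin-type splitting at the successive $\delta$-delayed entrance times to $C$ then yields regeneration times whose increments have a geometric tail governed by $\sup_{x\in C}\EE_x[\kappa^{\tau_C(\delta)}]$, and the usual coupling estimate produces $\|\CP_t(x,\cdot)-\pi\|_{TV}\le dV(x)\rho^t$ for some $\rho<1$, with $V$ the Lyapunov function obtained from $1\Rightarrow 2$. Finally, the bound $\CP_tV(x)\le e^{-\beta t}V(x)+b/\beta$ shows that $V(x)<\infty$ forces $\CP_t(x,S_V)=1$, so $S_V$ is absorbing; being nonempty (it contains $x_0$), it is full by irreducibility. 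The only further delicate points are the careful justification of the local-supermartingale and optional-stopping manipulations at $\sigma_{C'}$ and $\tau_C(\delta)$ without explosion, and the petiteness of the sublevel sets of $V$.
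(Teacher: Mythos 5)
You should first note that the paper does not prove Theorem \ref{ThmExpo} at all: it is quoted as a known result from \cite{SurveyMeynTweedie} (the Down--Meyn--Tweedie theory), so your proposal has to be measured against that classical argument rather than against anything in the text. Your $2\Rightarrow 1$ direction is essentially the standard one and is sound in outline: pass to a sublevel set $C'=\{V\le d\}$ with $d\ge b/(\beta-\beta')$ so that $\LL V\le-\beta' V$ off $C'$, use the nonnegative local supermartingale $e^{\beta'(t\wedge\sigma_{C'})}V(X_{t\wedge\sigma_{C'}})$, the bound $\CP_t V\le e^{-\beta t}V+b/\beta$, and the discrete-time theory (via skeletons/resolvents) for petiteness of sublevel sets. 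Two wrinkles deserve mention: $\{V\le d\}$ need not be closed since $V$ is only measurable, and with the paper's convention $\tau_{C'}(\delta)=\inf\{t>\delta,\ X_t\in C'\}$ one only has $\tau_{C'}(\delta)\ge\delta+\sigma_{C'}\circ\theta_\delta$, so your identity $\EE_x[\kappa^{\tau_{C'}(\delta)}]=\kappa^\delta\CP_\delta g(x)$ points the wrong way (easily repaired by bounding $\tau_{C'}(\delta)\le\delta'+\sigma_{C'}\circ\theta_{\delta'}$ for some $\delta'>\delta$). The coupling/splitting sketch for the convergence rate and your argument that $S_V$ is absorbing and full are fine in spirit.

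The genuine gap is in $1\Rightarrow 2$. The function $V(x)=1+\EE_x\big[\int_0^{\tau_C(\delta)}\kappa^s\,ds\big]$ is \emph{not} superharmonic off $C$ in the required sense, precisely because of the $\delta$-delay: for $t<\sigma_C\wedge\delta$ the Markov property gives $\tau_C(\delta)=t+\tau_C(\delta-t)\circ\theta_t$ with a \emph{smaller} residual delay, and since $\tau_C(\delta-t)\le\tau_C(\delta)$ this only yields a submartingale-type inequality, not the supermartingale one you assert. Concretely, take deterministic unit-speed motion to the left on $\RR$ with $C=(-\infty,0]$: for $x\in(0,\delta)$ one has $\tau_C(\delta)\equiv\delta$ under $\PP_x$, so $V$ is constant on $(0,\delta)$ and $\LL V=0$ there, whereas Condition 2 would force $\LL V\le-\beta V<0$; the same example refutes the claimed supermartingale property (which is moreover stated with the wrong sign: $\LL V\le-\beta V$ off $C$ corresponds to $\kappa^{+s}V(X_s)$, not $\kappa^{-s}V(X_s)$, being a local supermartingale up to $\sigma_C$). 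So the explicit recipe you give does not produce a Lyapunov function, and the step you yourself flag as the ``main obstacle'' --- passing through the $\delta$-skeleton, for which the delayed return time is the honest return time, obtaining a discrete drift $\CP_\delta V_0\le\lambda V_0+b\mathbf{1}_C$ from the classical $(V4)$-theory, and then transferring it to the extended generator by smoothing, e.g. $V(x)=\int_0^\delta e^{\beta s}\CP_s V_0(x)\,ds$ or a resolvent --- is not an optional refinement but the actual content of the implication, and it is left unproved in your proposal.
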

%

In the study of subgeometric rates, the situation is quite different. While a moment condition of some hitting time similar to (\ref{ConditionMomentExpo}) can be found, as well as a Lyapunov condition similar to (\ref{ConditionLyapunovExpo2}), there is no equivalence between them. In this note, we identify an intermediate Lyapunov condition, equivalent to some moment condition for subgeometric convergence rates, and prove the following result, with the same notations as above. 

\begin{thm}
\label{MainThm}
Assume that $(X_t)_{t \geq 0}$ is non-explosive, irreducible and aperiodic. Let \bb $\phi$ be a $C^1$ function, with $\phi: [1,\infty) \to [1,\infty)$, \dd strictly increasing, strictly concave with $\phi(x) \le x$ for all $x \geq 1$ and $\frac{\phi(x)}{x} \downarrow 0$, $\phi(x) - x \phi'(x) \uparrow \infty$ when $x \to \infty$. Define the function $H_{\phi}(\cdot)$ on $[1,\infty)$ by
\[ H_{\phi}(u) = \int_{1}^u \frac{ds}{\phi(s)}, \]
and let $\Hpi: [0,\infty) \to [1,\infty)$ be its inverse function. 
Consider the three following conditions.
\begin{enumerate}
\item[(C1)] \label{C1} There exists a compact petite subset $C$ of $E$ and some $r > 0$ such that, for $\tilde{\tau}_C^r$
defined by
\begin{align}
\label{DefTildeTauC}
\tilde{\tau}_C^r = \inf \Big\{t > 0, \int_0^t \mathbf{1}_C(X_s) ds \geq \frac{T}{r}\Big\}, 
\end{align} 
where $T$ is an exponential random variable with parameter 1 independent of everything else,
we have
\begin{align}
\label{ConditionMomentMainThm}
  \EE_x[\Hpi(\tilde{\tau}_C^r)] < \infty \quad \hbox{for all } x \in E \quad \hbox{ and }
  \quad \sup_{x \in C} \EE_x[\Hpi(\tilde{\tau}_C^r)] < \infty.
\end{align}
\item[(C2)] \label{C2} There exists a compact petite subset $C$ of $E$, \bb two constants $\kappa \ge 2$, \dd$ \eta >0$ and a function
$\psi$ on  $\RR_+ \times E$ with values in  $[1,\infty)$, continuous and non-decreasing in its first argument,
continuous in its second argument,
such that for all $t \geq 0$, $x \in E$,
\bb
\begin{align*}
\Hpi(t) &\leq \psi(t,x) \leq \Hpi(t) \psi(0,x), \\
(\partial_t + \mathcal{L}) \psi(t,x) &\leq \kappa \phi(\Hpi(t)) \mathbf{1}_{C}(x) - \phi (\Hpi(t)),
\end{align*}
\dd 
with moreover $\psi(0,x) \leq \kappa$ for all $x\in C$ and  for all $x \in E$, 
$ \mathcal{L} \psi(0,x) \leq \kappa \mathbf{1}_C(x) - \eta$.
\item[(C3)] \label{C3} There exists a compact petite subset $C$ of $E$, a constant $K>0$ and
$V: E \to [1,\infty)$ continuous with precompact sublevel sets such that for all $x\in E$,
\begin{align}
\label{ConditionLyapSubGeom}
\mathcal{L}V(x) \leq -\phi(V(x)) + K \mathbf{1}_C(x).
\end{align} 
\end{enumerate}
Conditions \hyperref[C1]{(C1)} and \hyperref[C2]{(C2)} are equivalent, and both are implied by condition \hyperref[C3]{(C3)}.
Moreover, in those three cases, there exists an invariant probability measure $\pi$ for $(\P_t)_{t \geq 0}$ on $E$ and for all $x \in E$, 
\[ \lim \limits_{t \to \infty} \phi (\Hpi(t)) \|\P_t(x,\cdot) - \pi(\cdot) \|_{TV} = 0. \]
\end{thm}

The fact that (\ref{ConditionLyapSubGeom}) implies the convergence was proved by Douc, Fort and Guillin \cite{DFG}, see also Fort-Roberts \cite{FortRoberts} for the polynomial case, and was simplified, with stronger hypothesis and for the case of the total variation distance considered here, by Hairer \cite{Hairer}. The papers  \cite{FortRoberts} and \cite{DFG} also identify a moment condition similar to (\ref{ConditionMomentMainThm}), see Remark \ref{rmk:occupation_time_FR}, however they do not provide an equivalence result between the two conditions. 

\subsection{Links with previous results}

We make several remarks on this theorem and its connections to some known results.

\begin{rmk}[Converse implications and weak Poincaré inequalities]
\bb
An open natural question rose  by Theorem \ref{MainThm} is the following converse implication: starting from \hyperref[C2]{(C2)}, can we prove \hyperref[C3]{(C3)} ? We do not provide a firm answer to this seemingly difficult question, although the regeneration model studied in Section \ref{sec:examples} may be a good starting point for such a study. We also point out that it might be the case, for specific models, that \hyperref[C3]{(C3)} does not hold, but a weaker version of it does, in the sense that the function $\phi$ appearing in \hyperref[C3]{(C3)} might differ from the one in \hyperref[C2]{(C2)}. Such a weak convergence result would still be of interest, especially for diffusion operators, for which the existence of a functional $V$ satisfying \eqref{eq:Old_Lyap} is used in a paper by Cattiaux-Gozlan-Guillin-Roberto\cite{Cattiaux_GGR_2010} to derive some weighted Poincaré and weighted Cheeger inequalities. Using directly \hyperref[C2]{(C2)} to derive such weighted inequalities might also be possible, and we plan to tackle this apparently intricate question in future work. 
\end{rmk}

\begin{rmk} \bb 
Another possible extension is the inclusion of our new conditions \hyperref[C1]{(C1)} and \hyperref[C2]{(C2)} into the relations for the subgeometric case developed by Cattiaux and Guillin \cite[Theorem 1.2]{Cattiaux_Guillin_2017}. In the context of \cite{Cattiaux_Guillin_2017}, the state space is an unbounded set $D \subset \RR^d$, $d \ge 1$, whose unit outward normal vector at the boundary is denoted $n$ (if $\partial D$ is not void). The generator $\L$ is a differential operator which can be seen as the infinitesimal generator of a diffusion process.
Let us consider the condition (HLS1) from \cite[Corollary 3.3]{Cattiaux_Guillin_2017}.

\begin{enumerate}
\item [(HSL1)] There exists a smooth function $W : D \to \RR$ with $W \ge w > 0$, and some constants $\lambda, b > 0$ such that $\tfrac{\partial W}{\partial n} = 0$ on $\partial D$ and, for all $x \in D$,
\[ \L W(x) \le - \lambda |x|^2 W(x) + b \mathbf{1}_A(x), \]
for some bounded subset $A \subset D$.
\end{enumerate}

We show that (HSL1) implies (C2). Indeed, without loss of generality we can assume that $w = 1$. Consider $\phi(x) = \sqrt{x}$ which satisfies the hypothesis from Theorem \ref{MainThm}. A computation similar to the one performed in Subsection \ref{SubSec3imp2} shows that, setting for all $x \in D$, $t \ge 0$,
\[ \psi(t,x) := 2 \Hpi(H_{\phi}(W(x)) + t) - \Hpi(t), \]
we have
\begin{align*}
(\partial_t + &\L) \psi(t,x) \\
&\le 2 \phi \Big(\Hpi(H_{\phi}(W(x)) + t) \Big) + 2 \frac{\phi \Big(\Hpi(H_{\phi}(W(x)) + t) \Big)}{\phi(W(x))} \L W(x) - \phi(\Hpi(t)) \\
&\le 2 \phi \Big(\Hpi(H_{\phi}(W(x)) + t) \Big) + 2 \frac{\phi \Big(\Hpi(H_{\phi}(W(x)) + t) \Big)}{\phi(W(x))} (- \lambda |x|^2 W(x) + b \mathbf{1}_A(x)) \\
&\hspace{10cm} - \phi(\Hpi(t)).
\end{align*}
Noting that
\begin{align*}
\lambda |x|^2 \frac{W(x)}{\phi(W(x))} =  \lambda |x|^2 \sqrt{W(x)} \ge \lambda |x|^2 \mathbf{1}_{\{|x| \ge \tfrac{1}{\sqrt{\lambda}}\}}  \ge 1 - \mathbf{1}_{\big\{|x| \le \tfrac{1}{2 \sqrt{\lambda}}\big\}}, 
\end{align*}
where we used that $W \ge 1$ on $D$, we deduce that for some compact set $C$ with $A \cup \{|x| \le \tfrac{1}{2 \sqrt{\lambda}}\} \subset C$, for some constant $b_3 > 0$,  
\begin{align*}
(\partial_t + &\L) \psi(t,x) \le b_3 \phi \Big(\Hpi(H_{\phi}(W(x)) + t) \Big) \mathbf{1}_C(x) - \phi(\Hpi(t)).
\end{align*}
We then use the properties of $\Hpi$ and the fact that $W$ is smooth, hence bounded on $C$ compact, to conclude. Note that $C$ is also petite as a compact set of $\RR^d$, using that for diffusion processes such as those considered in \cite{Cattiaux_Guillin_2017}, compact sets of $\RR^d$ are petite, see \cite{Meyn_Stability_1}. The other requirements of \hyperref[C2]{(C2)} are easily seen to be satisfied. 

This provides valuable insights for the diffusion case of \cite{Cattiaux_Guillin_2017}. Indeed, since, according to their Theorem 1.2, some hitting time condition, (HLS2), that we do not detail here for the sake of conciseness, is equivalent to (HLS1), this shows that in this case, both notions of time (hitting time and occupation time) provide the same results. Additionally, this gives some converse implication: some log-Sobolev inequality, which is known to imply (HLS1), thus implies our condition (C2). We refer to \cite{Cattiaux_Guillin_2017} for more details on the relations that can be developed from (HLS1). The derivation of those other conditions starting from (C2) or (C1), or the converse implications, seem significantly more involved.   
\end{rmk}

\bb
\begin{rmk}
\label{rmk:occupation_time_FR}
In \cite[Theorem 1]{FortRoberts}, the occupation time condition is given, for the total variation distance, by the control of the quantity 
\[ \sup_{x \in C} \EE_x \Big[ \int_0^{\tau_C(\delta)} \phi \circ \Hpi (s) ds \Big], \]
for some $\delta > 0$. Since $\phi \circ \Hpi = (\Hpi)'$, this condition is very similar to \hyperref[C1]{(C1)}. Our condition is stronger, since it is equivalent to \hyperref[C2]{(C2)} from the above theorem and since we prove that \hyperref[C2]{(C2)} implies the condition of \cite{FortRoberts} to derive the convergence result. Yet the occupation time of \hyperref[C1]{(C1)} should be thought of as a regularized version of $\tau_C(\delta)$. In the exponential case, both random times give the same result regarding the asymptotic behavior, see Down-Meyn-Tweedie \cite[Lemma 4.2 and Theorem 6.5]{DownMT}, but the adaptation of the corresponding arguments to the subgeometric case is unclear. 
\end{rmk}

\begin{rmk}[The discrete-time case]
\bb
The condition \hyperref[C2]{(C2)} can be seen as an extension of a similar condition for the discrete-time case. Indeed, Tuominen and Tweedie give the following result:

\begin{thm}[Tuominen-Tweedie \cite{Tuominen_1994}]
\label{thm:Tuominen}
Assume that $P$ is the transition matrix, irreducible and aperiodic, of a Markov chain $(X_n)_{n \ge 0}$. Let $r$ a subgeometric rate function, i.e. $r$ is positive with $r(0) \ge 1$ and for some $r_0$ such that $r_0(1) \ge 2$, $r_0$ non-decreasing with $\frac{\ln(r_0(n))}{n} \overset{n \to \infty}{\to} 0$,
\[ 0 < \liminf_{n \to \infty} \frac{r(n)}{r_0(n)} \leq \limsup_{n \to \infty} \frac{r(n)}{r_0(n)} < \infty. \]
Suppose that there exist a sequence of extended real valued functions $(V_n)_{n \ge 0}$ with $V_n: E \to [1,\infty)$, a petite set $C \in \mathcal{B}(E)$ and $b < \infty$ constant such that $V_0$ is bounded on $C$, 
\[ \Big(V_0(x) = + \infty \Big) \implies \Big( V_1(x) = + \infty \Big), \]
and 
\[ P V_{n+1} - V_n \le -r(n) + b r(n) \mathbf{1}_C. \]
Then for all $x$ such that $ \EE_x[\sum_{k=0}^{\tau_C-1} r(k)] < \infty$, where $\tau_C := \inf \{ n > 0, X_n \in C\}$,
\begin{align*}
\lim \limits_{n \to \infty} r(n) \| P^n(x,\cdot) - \pi(\cdot)\|_{TV} = 0.
\end{align*}
\end{thm}
For $r(n) = \phi \circ \Hpi(n)$ which is a subgeometric rate (see Lemma \ref{lemma:subgeometric}) and $V_n(x) = \psi(n,x)$, we see that condition \hyperref[C2]{(C2)} from Theorem \ref{MainThm} corresponds to the discrete-time condition of Theorem \ref{thm:Tuominen}. One might view \hyperref[C2]{(C2)} as a continuous counterpart of the condition from Theorem \ref{thm:Tuominen}. As in our case, the discrete-time counterpart to \hyperref[C3]{(C3)} implies the condition of Theorem \ref{thm:Tuominen}, see Douc-Fort-Moulines-Soulier \cite[Proposition 2.1]{DFMS_2004}. 

\end{rmk}
\dd

The remaining part of this note is organized as follows. In Section \ref{Setting}, we recall the main definitions of the theory of convergence for continuous-time strong Markov processes, and define our notion of extended generator, following \cite{Davis18}. 
In Section \ref{SectionEquivalence}, we prove the new results of Theorem \ref{MainThm} above. \bb In Section \ref{sec:examples} we give two examples of application. The first one is a regeneration model appearing in collisionless kinetic theory. For this model, we prove that condition \hyperref[C1]{(C1)} of Theorem \ref{MainThm} holds, and provide several results pointing towards the non-existence of a functional $V$ such that condition \hyperref[C3]{(C3)} holds (although a complete proof still lacks at the moment). The second example is the study of the Compound Poisson-process driven Ornstein-Uhlenbeck process, for which it is known from \cite{FortRoberts, DFG} that \hyperref[C3]{(C3)} applies and for which we find an explicit function $\psi$ such that \hyperref[C2]{(C2)} holds. 
\dd 

\section{Setting, definitions and preliminary results}
\label{Setting}

\subsection{Setting and definitions}
Let $X = (X_t)_{t \geq 0}$ be a continuous-time strong Markov process with values in a Polish space $E$. For $x \in E$, we write $\PP_x$ for the probability measure such that $\PP_x(X_0 = x) = 1$, $\EE_x$ the corresponding expectation. We denote by $(\P_t)_{t \geq 0}$ the corresponding semigroup: for all functions  $f$ in $ \mathcal{B}_b(E)$ with $\mathcal{B}_b(E) = \{f: E \to \RR, f \text{ measurable and bounded} \},$  for all $x \in E$, we have $\P_tf(x) = \EE_x[f(X_t)]$. We set, for $f \in \mathcal{B}_b(E)$, $x \in E$, $\hat{\mathcal{L}}f(x) = \frac{d}{dt} \EE_x[f(X_t)]|_{t = 0}$ provided this object
exists. We call $\hat{\L}$ the (strong) generator  and $\mathcal{D}(\hat{\mathcal{L}})$ its domain given by $$\mathcal{D}(\hat{\mathcal{L}}) = \Big\{f: E \to \RR, \forall x \in E, \lim \limits_{t \to 0} \frac{\P_t f(x) - f(x)}{t} \text{ exists} \Big\} . $$

Let us recall some more definitions. We say that a continuous-time Markov process $(X_t)_{t \geq 0}$ with values in $E$ is non-explosive if there exists a family of pre-compact open sets $(O_n)_{n \geq 0}$ such that $O_n \to E$ as $n \to \infty$, and such that, setting for all $m \geq 0$, $T_m = \inf \{t > 0, X_t \not \in O_m\}$, for all $x \in E$, $$\PP_x \Big(\lim_{m \to \infty} T_m = \infty \Big) = 1.$$

We say that $(X_t)_{t \geq 0}$ is $\varphi$-irreducible for some $\sigma$-finite measure $\varphi$ if $\varphi(B) > 0$ implies that for all $B \in \mathcal{B}(E)$, for all $x\in E$, $\EE_x[\int_0^{\infty} \mathbf{1}_{B}(X_s) ds] > 0$. A $\varphi$-irreducible process admits a maximal irreducibility measure $\psi$ such that $\mu$ is absolutely continuous with respect to $\psi$ for any other irreducibility measure $\mu$ \cite{Nummelin84}. A set $A \in \mathcal{B}(E)$ such that $\psi(A) > 0$ for some maximal irreducibility measure $\psi$ is then said to be accessible, and full is $\psi(A^c) = 0$. A set $A \in \mathcal{B}(E)$ is said to be absorbing if $\PP_x(X_t \in A) = 1$ for all $x \in A$, $t \geq 0$.
We simply say that $(X_t)_{t \geq 0}$ is irreducible if it is
$\varphi$-irreducible for some $\sigma$-finite measure $\varphi$.

A non-empty measurable set $C$ is said to be petite if there exists a probability measure $a$ on
$\mathcal{B}(\RR_+)$ and a non-trivial $\sigma$-finite measure $\nu$ on $\mathcal{B}(E)$ such that
$$ \forall x \in C, \int_0^{\infty} \P_t(x,\cdot) a(dt) \geq \nu(\cdot). $$

We say that a process $(X_t)_{t \geq 0}$ with associated semigroup $(\P_t)_{t \geq 0}$ is aperiodic if there exists an $m > 0$ such that, denoting by $\delta_m$ the Dirac mass at $m$, there exists an accessible $\delta_m$-petite set $C$ (i.e. petite with measure $a=\delta_m$ on $\RR_+$)
and some $t_0 \geq 0$ such that for all $x \in C$, $t \geq t_0$, $\P_t(x,C) > 0$.

We assume furthermore that our process is Feller, in the sense that for all $t>0$, all
continuous bounded function $f : E \to \R$, the function $\P_tf:E \to \R$ is also continuous.


The (weak) Feller property implies that $(X_s)_{s \geq 0}$ has a c\`adl\`ag modification, which
we will always consider
from now on,  see for instance \cite[Theorem 2.7]{Revuz91}. In particular,
the hitting times of closed sets are stopping times.
%
%

We have the following result on $\mathcal{D}(\hat{\mathcal{L}})$.

\begin{prop}\cite[Propositions 14.10 and 14.13]{Davis18}
For $f \in \mathcal{D}(\hat{\mathcal{L}})$, for all $x\in E$, all $t \geq 0$, we have
$\int_0^t |\hat{\mathcal{L}}f(X_s)|ds < \infty$ $\PP_x$-a.s.
Moreover, defining the real-valued process $(C_t^f)_{t \geq 0}$ by
$$ C_t^f = f(X_t) - f(X_0) - \int_0^t \hat{\mathcal{L}}f(X_s) ds, $$
the process $(C_t^f)_{t \geq 0}$ is a  $\PP_x$-local martingale for any $x\in E$.
\end{prop}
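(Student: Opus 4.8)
This is a Dynkin-type formula: it states that every $f$ in the pointwise domain $\mathcal{D}(\hat{\mathcal{L}})$ lies in the ``extended'' domain, i.e.\ generates a local martingale of the indicated form. The plan is to proceed in three steps: (i) control $\int_0^t|\hat{\mathcal{L}}f(X_s)|\,ds$ by localising along the precompact sets given by non-explosiveness; (ii) establish, for bounded $f$ with bounded $\hat{\mathcal{L}}f$, the integrated Kolmogorov identity $\CP_t f=f+\int_0^t\CP_s\hat{\mathcal{L}}f\,ds$; (iii) read off the (local) martingale property from the Markov property.

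\textbf{Step 1: the additive functional.} I would take the precompact open sets $(O_n)_{n\ge0}$ with $O_n\uparrow E$ furnished by non-explosiveness, arranged so that $\bar{O}_n\subset O_{n+1}$, and set $T_n=\inf\{t>0,\ X_t\notin O_n\}$, so that $T_n\uparrow\infty$ $\PP_x$-a.s.\ for every $x$. For Lebesgue-a.e.\ $s<t\wedge T_n$ one has $X_s\in O_n\subset\bar{O}_n$, a compact set on which $\hat{\mathcal{L}}f$ is bounded in the present setting; hence $\int_0^{t\wedge T_n}|\hat{\mathcal{L}}f(X_s)|\,ds\le t\sup_{\bar{O}_n}|\hat{\mathcal{L}}f|<\infty$, and since $T_n>t$ eventually a.s.\ (for $t$ fixed), letting $n\to\infty$ gives $\int_0^t|\hat{\mathcal{L}}f(X_s)|\,ds<\infty$ $\PP_x$-a.s., which is the first assertion. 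This also reduces the second assertion to showing that each stopped process $(C^f_{t\wedge T_n})_{t\ge0}$ is a true $\PP_x$-martingale, $(T_n)$ being then the localising sequence.

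\textbf{Step 2: Dynkin's formula for bounded data.} For bounded $f$ with bounded $\hat{\mathcal{L}}f$ I would first prove
\[ \CP_t f = f + \int_0^t\CP_s\hat{\mathcal{L}}f\,ds,\qquad t\ge0. \]
Granting this, the Markov property at time $s\le t$ and Fubini (licit as $\hat{\mathcal{L}}f$ is bounded) give
\[ \EE_x\Big[f(X_t)-f(X_s)-\int_s^t\hat{\mathcal{L}}f(X_u)\,du\ \Big|\ \mathcal{F}_s\Big]=\CP_{t-s}f(X_s)-f(X_s)-\int_0^{t-s}\CP_u\hat{\mathcal{L}}f(X_s)\,du, \]
and the right-hand side is zero by the displayed identity evaluated at the point $X_s$; so $C^f$ is an integrable martingale. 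To obtain the identity, I would use the semigroup relation $\varepsilon^{-1}(\CP_{t+\varepsilon}f-\CP_t f)=\CP_t\big(\varepsilon^{-1}(\CP_\varepsilon f-f)\big)$ together with $\varepsilon^{-1}(\CP_\varepsilon f-f)\to\hat{\mathcal{L}}f$ pointwise, the issue being to upgrade this to convergence that passes through $\CP_t$ and through $\int_0^t\,ds$, namely bounded-pointwise convergence. The resolvent handles it cleanly: for bounded $h$ and $\lambda>0$, writing $R_\lambda h=\int_0^\infty e^{-\lambda u}\CP_u h\,du$, the identity $\CP_\varepsilon R_\lambda h=e^{\lambda\varepsilon}\big(R_\lambda h-\int_0^\varepsilon e^{-\lambda u}\CP_u h\,du\big)$ shows that $\varepsilon^{-1}(\CP_\varepsilon R_\lambda h-R_\lambda h)\to\lambda R_\lambda h-h$ with modulus bounded by $C_\lambda\|h\|_\infty$ uniformly in $\varepsilon\in(0,1]$; hence $R_\lambda h\in\mathcal{D}(\hat{\mathcal{L}})$, $\hat{\mathcal{L}}R_\lambda h=\lambda R_\lambda h-h$, and the displayed identity for $R_\lambda h$ is immediate. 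It then remains to identify the given $f$ with $R_\lambda(\lambda f-\hat{\mathcal{L}}f)$: the function $\phi:=f-R_\lambda(\lambda f-\hat{\mathcal{L}}f)$ is bounded, belongs to $\mathcal{D}(\hat{\mathcal{L}})$, and solves $\hat{\mathcal{L}}\phi=\lambda\phi$, and a maximum-principle argument (using the Feller property) forces $\phi\equiv0$. Finally, for general locally bounded $f$ I would apply the bounded-data result on $[0,t\wedge T_n]$ via optional stopping to obtain that $(C^f_{t\wedge T_n})_{t\ge0}$ is a martingale, completing the reduction set up in Step 1.

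\textbf{Main obstacle.} The one genuinely non-routine point is the upgrade in Step 2 from the pointwise definition of $\hat{\mathcal{L}}$ to the integrated Kolmogorov identity — equivalently, the inclusion of the pointwise generator in the full (extended) generator. Via the resolvent this comes down to uniqueness for $\lambda\phi=\hat{\mathcal{L}}\phi$ among bounded $\phi$, which on a non-compact state space is delicate: one carries it out on the localised process, whose state space is effectively the compact $\bar{O}_n$, so that $\sup\phi$ is attained at some $x_\ast$ and $\hat{\mathcal{L}}\phi(x_\ast)\le0$ combined with $\lambda\phi(x_\ast)=\hat{\mathcal{L}}\phi(x_\ast)$ forces $\phi\le0$ (and symmetrically $\phi\ge0$). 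This is precisely the circle of ideas packaged in \cite[Propositions 14.10 and 14.13]{Davis18} on which the statement rests; the remaining ingredients — the localisation of Step 1 and the Markov-property computation of Step 2 — are routine.
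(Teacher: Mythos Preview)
The paper does not give a proof of this proposition at all: it is stated with a citation to \cite[Propositions~14.10 and~14.13]{Davis18} and used as a black box. So there is no ``paper's own proof'' to compare against; your proposal is an attempt to supply an argument that the paper simply imports.

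As a standalone argument, your sketch follows the standard Dynkin-formula route, but there is a genuine gap relative to the paper's very weak definition of $\hat{\mathcal{L}}$. The paper only assumes the \emph{pointwise} existence of $\lim_{t\to 0}t^{-1}(\CP_tf(x)-f(x))$ for each $x$; nothing forces $\hat{\mathcal{L}}f$ to be continuous, locally bounded, or even measurable. Yet in Step~1 you assert that $\hat{\mathcal{L}}f$ is bounded on each compact $\bar O_n$ ``in the present setting'', and Step~2 is carried out under the blanket hypothesis that $\hat{\mathcal{L}}f$ is bounded. Neither is justified by the paper's definition. Likewise, your maximum-principle step requires that $\phi$ attain its supremum and that $\hat{\mathcal{L}}\phi(x_\ast)\le 0$ at a maximiser; with only a pointwise limit and a weak Feller property, continuity of $\phi$ and the sign condition on $\hat{\mathcal{L}}\phi$ at $x_\ast$ are not automatic. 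These are exactly the technical points that Davis's treatment handles (in his setting) and why the paper defers to \cite{Davis18} rather than reproving the result. Your outline is the right architecture for a proof under stronger hypotheses (e.g.\ $\hat{\mathcal{L}}f$ bounded, or a $C_0$-semigroup), but it does not close under the bare pointwise definition used here.
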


Following Davis \cite{Davis18}, we define an extension of the generator $\hat{\mathcal{L}}$ in the following way.

\begin{defi}
Let $\mathcal{D}(\L)$ denote the set of measurable functions $f: E \to \mathbb{R}$ with the following property: there exists a measurable function $h: E \to \RR$ such that for all $x \in E$,
there holds $\PP_x(\forall t \geq 0, \int_0^t |h(X_s)| ds < \infty ) = 1$, and the process 
$$ C_t^f = f(X_t) - f(X_0) - \int_0^t h(X_s) ds, $$
is a $\PP_x$-local martingale. In this case, we set $\mathcal{L}f := h$. We
call $(\mathcal{L}, \mathcal{D}(\mathcal{L}))$ the extended generator of $(X_t)_{t \geq 0}$. 
\end{defi}

The extended generator is indeed an extension: we have $\mathcal{D}(\hat{\mathcal{L}}) \subset \mathcal{D}(\mathcal{L})$ and $\mathcal{L}$ and $\hat{\mathcal{L}}$ coincide on $\mathcal{D}(\hat{\mathcal{L}})$. Following \cite{Davis18} again, we introduce the following notation.  

\begin{notat}
\label{NotatLf}
For $f:E \to \RR$, for $g: E \to \RR$ measurable such that $\int_0^t |g(X_s)| ds < \infty$ for all $t \geq 0$, $\PP_x$-almost surely for all $x \in E$, we write
$$ \mathcal{L} f \leq g $$
if the process 
$$ f(X_t) - f(x) - \int_0^t g(X_s) ds $$
is a $\PP_x$-local supermartingale for all $x \in E$.
\end{notat}

\begin{rmk}[\cite{Hairer}]
It is possible to have $\L f \leq g$ even in situations where $f$ does not belong to the
extended domain of $\L$. For instance, take $f(x) = - |x|$ when $(X_t)_{t \geq 0}$ is a Brownian motion. In this case, one has $\L f \leq 0$, but $f \not \in \mathcal{D}(\L)$, and \textit{a fortiori}  $f \not \in \mathcal{D}({\hat{\L}})$.
\end{rmk}

Similarly, we introduce

\begin{notat}
\label{NotatLf2}
If $j: \RR_+ \times E \to \RR$ is $C^1$ in its first argument, for $k: \RR_+ \times E \to \RR$ measurable such that
for all $t \geq 0$, we have $\int_0^t |k(s,X_s)| ds < \infty$ $\PP_x$-a.s. for all $x \in E$,
we write
$$ (\partial_t + \L)j \leq k $$
\[ \hbox{if} \qquad  M_t := j(t,X_t) - j(0,x) - \int_0^t k(s,X_s) ds \]
is a $\PP_x$-local supermartingale for all $x \in E$.
\end{notat}

In this note, we use the following definition of the total variation distance: for two probability measures $\mu$, $\nu$ on $E$, we set 
$$ \|\mu - \nu\|_{TV} = \frac{1}{2} \sup_{A \in \mathcal{B}(E)}|\mu(A) - \nu(A)|. $$
As a consequence, we have 
$$ \|\mu - \nu\|_{TV} = \inf_{Z \sim \mu, Z' \sim \nu} \PP(Z \ne Z'), $$
where the infimum is taken over all couples of random variables  such that $Z$ has law $\mu$ and $Z'$ has law $\nu$.

\subsection{Extended generator and local martingales}

As we are working in an abstract framework, we heavily use the extended generator (see Notations \ref{NotatLf} and \ref{NotatLf2}) and the inequalities of the form $$\L f \leq g \qquad \text{ and } \qquad (\partial_t + \L) \psi \leq \psi_2. $$
For this reason, we will use several preliminary results from \cite{Hairer} that we detail below.

\begin{prop} 
\label{PropHairerMartingale} 
Let $(y_t)_{t \geq 0}$ be a real-valued c\`adl\`ag semimartingale and let $ \varphi: \RR_+ \times \RR \to \RR$ be a function that is $C^1$ in its first argument, and $C^2$ and concave in its second argument. Then, the process 
$$ \varphi(t,y_t) - \int_0^t \partial_x \varphi(s,y_{s-})dy_s - \int_0^t \partial_t \varphi(s, y_{s-}) ds$$
is non-increasing.
\end{prop}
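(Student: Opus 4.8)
The plan is to reduce the statement to the general It\^o formula for c\`adl\`ag semimartingales, together with two elementary consequences of concavity. First I would apply It\^o's formula to $\varphi(t,y_t)$, in the version valid for a semimartingale $y$ and a function $\varphi$ that is $C^1$ in time and $C^2$ in space (see, e.g., Protter's monograph): writing $\Delta y_s = y_s - y_{s-}$ and $[y]^c$ for the path-continuous part of the quadratic variation of $y$, one gets
\[
\varphi(t,y_t) = \varphi(0,y_0) + \int_0^t \partial_t\varphi(s,y_{s-})\,ds + \int_0^t \partial_x\varphi(s,y_{s-})\,dy_s + \frac12\int_0^t \partial_{xx}\varphi(s,y_{s-})\,d[y]^c_s + \sum_{0<s\le t}\Big(\varphi(s,y_s)-\varphi(s,y_{s-})-\partial_x\varphi(s,y_{s-})\Delta y_s\Big).
\]
Here I would note that the stochastic integral is well defined because $s \mapsto \partial_x\varphi(s,y_{s-})$ is adapted and left-continuous with right limits, hence locally bounded; and that, the time coordinate having no jumps, $\int_0^t \partial_t\varphi(s,y_{s-})\,ds = \int_0^t \partial_t\varphi(s,y_s)\,ds$, so the two integrals subtracted in the statement are exactly the first and third terms on the right-hand side above.

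Rearranging, the process in question equals
\[
\varphi(0,y_0) + \frac12\int_0^t \partial_{xx}\varphi(s,y_{s-})\,d[y]^c_s + \sum_{0<s\le t}\Big(\varphi(s,y_s)-\varphi(s,y_{s-})-\partial_x\varphi(s,y_{s-})\Delta y_s\Big),
\]
so it suffices to check that both the integral term and the jump sum are non-increasing in $t$. For the integral term, concavity of $x \mapsto \varphi(s,x)$ gives $\partial_{xx}\varphi(s,\cdot) \le 0$, while $d[y]^c_s$ is a non-negative measure, so $t \mapsto \frac12\int_0^t \partial_{xx}\varphi(s,y_{s-})\,d[y]^c_s$ is non-increasing. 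For the jump sum, the tangent-line inequality for a $C^1$ concave function, $\varphi(s,b) \le \varphi(s,a) + \partial_x\varphi(s,a)(b-a)$, applied with $a = y_{s-}$ and $b = y_s$, shows that every summand is $\le 0$; hence the partial sums decrease as $t$ increases. This sum is absolutely convergent, its summands being $O(|\Delta y_s|^2)$ for small jumps with $\sum_{s}|\Delta y_s|^2 < \infty$ for a semimartingale, so no conditional-convergence issue arises. Adding the two monotone pieces to the constant $\varphi(0,y_0)$ yields a non-increasing process, which is the claim.

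The only point requiring care is the bookkeeping in It\^o's formula: one must use the form that simultaneously accommodates the explicit time dependence (for which only $C^1$ regularity in $t$ is needed) and the jumps of $y$, producing the compensated jump sum above rather than a naive second-order correction. Once that formula is written down, the rest is immediate from $\partial_{xx}\varphi \le 0$ and the chordal characterization of concavity; in particular no martingale property or integrability assumption on $y$ intervenes, consistently with the fact that the conclusion is a pathwise monotonicity statement.
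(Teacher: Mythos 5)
Your proposal is correct and takes essentially the same route as the paper: It\^o's formula for c\`adl\`ag semimartingales with a $C^{1,2}$ function, with the continuous second-order term controlled by $\partial_{xx}\varphi\le 0$ against the non-negative quadratic-variation measure and the compensated jump terms by the tangent-line inequality for concave functions. If anything, you are more explicit than the paper, which only invokes the sign of the quadratic-variation term and leaves the (equally necessary) non-positivity of the jump sum implicit.
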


\begin{proof} As $(y_t)_{t \geq 0}$ is a semimartingale, we can write it as $y_t = A_t + M_t$, where $(A_t)_{t \geq 0}$ is a process of finite variation and $(M_t)_{t \geq 0}$ is a local martingale.  From It\^o's formula for c\`adl\`ag processes, see for instance \cite[Theorem 4.57]{Jacod87}, we then have
\begin{align*}
 \varphi(t,y_t) &= \varphi(0,y_0) + \int_0^t \partial_x \varphi(s,y_{s-}) dy_s + \int_0^t \partial_t \varphi(s,y_{s-}) ds \\
 &\quad + \int_0^t \partial^2_x \varphi(s,y_{s-}) d \langle M \rangle^c_s + \sum_{s \in [0,t]} \Big(\varphi(s,y_s) - \varphi(s,y_{s-}) - \partial_x \varphi(s,y_{s-}) \Delta y_s \Big), 
 \end{align*}
where $\langle M \rangle^c_t$ denotes the quadratic variation of the continuous part of $M$ at time $t$ and with $\Delta y_s$ defined by
$\Delta y_s = y_{s} - y_{s-}.$ Since $\langle M \rangle^c_t$ is an increasing process, and $\partial^2_x \varphi(\cdot, \cdot) \leq 0$ by hypothesis, the claim follows. 
\end{proof}

Recall that we write $\L$ for the extended generator of our $E$-valued
Markov process $(X_t)_{t \geq 0}$.

\begin{coroll}
\label{CorollHairer}
Let $F, G : E \to \RR$ such that 
$$ \L F \leq G$$
in the sense of Notation \ref{NotatLf}.
Then, if $ \varphi: \RR_+ \times \RR \to \RR$ is a function that is $C^1$ in its first argument, and $C^2$ and concave in its second argument with additionally $\partial_x \varphi \geq 0$, then for all $t \geq 0$, all $x \in E$,
$$ (\partial_t + \L ) \varphi(t,F(x)) \leq \partial_t \varphi(t,F(x)) + \partial_x \varphi(t,F(x)) G(x), $$
in the sense of Notation \ref{NotatLf2}. 
\end{coroll}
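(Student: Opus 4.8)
The plan is to combine Proposition~\ref{PropHairerMartingale} with the defining inequality $\LL F \leq G$ of Notation~\ref{NotatLf}, using an optional-stopping/localization argument to transfer the supermartingale property through the nonlinearity $\varphi$. First I would fix $x \in E$ and work under $\PP_x$. By hypothesis, the process $N_t := F(X_t) - F(x) - \int_0^t G(X_s)\,ds$ is a $\PP_x$-local supermartingale, hence admits a decomposition $N_t = M_t - A_t$ with $(M_t)$ a local martingale and $(A_t)$ a non-decreasing, càdlàg, adapted process with $A_0 = 0$. Consequently $y_t := F(X_t) = F(x) + \int_0^t G(X_s)\,ds + M_t - A_t$ is a càdlàg semimartingale, whose finite-variation part is $\int_0^t G(X_s)\,ds - A_t$ and whose local-martingale part is $M_t$.

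Next I would apply Proposition~\ref{PropHairerMartingale} to this semimartingale $(y_t)$ and the function $\varphi$, which is $C^1$ in its first argument and $C^2$ and concave in its second. This yields that
\[
\varphi(t,y_t) - \int_0^t \partial_x \varphi(s,y_{s-})\,dy_s - \int_0^t \partial_t \varphi(s,y_{s-})\,ds
\]
is non-increasing. I would then split $dy_s = G(X_s)\,ds + dM_s - dA_s$, so that $\int_0^t \partial_x \varphi(s,y_{s-})\,dy_s = \int_0^t \partial_x \varphi(s,y_{s-}) G(X_s)\,ds + \int_0^t \partial_x \varphi(s,y_{s-})\,dM_s - \int_0^t \partial_x \varphi(s,y_{s-})\,dA_s$. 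Rearranging, the process
\[
\varphi(t,y_t) - \varphi(0,y_0) - \int_0^t \big( \partial_t \varphi(s,y_{s-}) + \partial_x \varphi(s,y_{s-}) G(X_s) \big)\,ds
\]
equals $\int_0^t \partial_x \varphi(s,y_{s-})\,dM_s$ minus a non-decreasing term (the $A$-integral, which is non-negative because $\partial_x\varphi \geq 0$ and $A$ is non-decreasing) minus another non-increasing term from Proposition~\ref{PropHairerMartingale}. Since $\partial_t\varphi$ and $\partial_x\varphi$ are continuous and $(X_s)$ is càdlàg, the integrand in the $ds$-integral differs from its left-limit version only on a Lebesgue-null set of times, so I may replace $y_{s-}$ by $y_s$ there; also $\partial_x\varphi(s,y_{s-})$ evaluated at $y_{s-}=F(X_{s-})$ should be matched against $\partial_x\varphi(s,F(X_s))G(X_s)$ similarly. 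Up to these identifications — and noting $\varphi(0,y_0) = \varphi(0,F(x))$ — the displayed process is a local martingale minus a non-decreasing process, i.e. a local supermartingale. Taking $j(t,z) := \varphi(t,F(z))$ and $k(s,z) := \partial_t\varphi(s,F(z)) + \partial_x\varphi(s,F(z))G(z)$, this is exactly the assertion $(\partial_t + \LL)\varphi(t,F(x)) \leq \partial_t\varphi(t,F(x)) + \partial_x\varphi(t,F(x))G(x)$ in the sense of Notation~\ref{NotatLf2}.

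The main obstacle I anticipate is the bookkeeping around local-martingale vs.\ supermartingale subtleties: a priori the stochastic integral $\int_0^t \partial_x\varphi(s,y_{s-})\,dM_s$ is only a local martingale, and one must check that the integrand is suitably locally bounded (which follows from continuity of $\partial_x\varphi$ and càdlàg-ness of $y$, along a localizing sequence of stopping times reducing $M$ and bounding $y$). One must also verify that the $ds$-integral in $k$ is $\PP_x$-a.s.\ absolutely convergent on compacts — this is where one uses that $\int_0^t |G(X_s)|\,ds < \infty$ a.s.\ (part of the meaning of $\LL F \leq G$) together with local boundedness of $\partial_x\varphi$ and $\partial_t\varphi$ along the path. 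None of this is deep, but it requires care to state the localization cleanly; the genuinely substantive input, the pathwise inequality coming from concavity, is already packaged in Proposition~\ref{PropHairerMartingale}.
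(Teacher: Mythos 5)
Your argument is correct and is essentially the paper's own proof: you decompose $y_t=F(X_t)$ as drift $G(X_s)\,ds$ plus a local martingale plus a non-increasing part (your $-A$), apply Proposition~\ref{PropHairerMartingale}, and use $\partial_x\varphi\geq 0$ to absorb the monotone terms, concluding that the process in Notation~\ref{NotatLf2} is a local martingale minus a non-decreasing process. The only difference is that you spell out details the paper leaves implicit (the null-set replacement of $y_{s-}$ by $y_s$ in the $ds$-integral, local boundedness of the integrand in the stochastic integral, and integrability of $k(s,X_s)$), which is fine.
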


\begin{proof}
Set $y_t = F(X_t)$ for all $t \geq 0$. We have
$$ d y_t = G(X_t) dt + dN_t +dM_t, $$
with $M$ a c\`adl\`ag  local martingale such that $M_0 = 0$ and $N$ a non-increasing process.
By Proposition \ref{PropHairerMartingale}, there is a non-increasing process $(R_t)_{t \geq 0}$
such that
$$
d \varphi(t,y_t) = \partial_x \varphi(t,y_{t-}) dy_t + \partial_t \varphi(t,y_{t-}) dt + dR_t,
$$
so that
$$d \varphi(t,y_t) = \partial_x \varphi(t,y_{t-}) (G(X_t)dt + dN_t + dM_t) + \partial_t \varphi(t,y_{t-}) dt + dR_t.
$$
Since $\partial_x \varphi$ is non-negative, the process
$$  \varphi(t,y_t) - \varphi(0,y_0) - \int_0^t \Big( \partial_t \varphi(s,y_{s-}) + \partial_x \varphi(s,y_{s-}) G(X_s) \Big) ds $$ is indeed a local supermartingale
(as sum of a local martingale and of a non-increasing process).
\end{proof}

\subsection{Properties of $\phi$ and $\Hpi$}

 We recall that  $\phi:[1,\infty)\to [1,\infty)$ is $C^1$, 
strictly increasing, strictly concave
such that $\phi(x)\leq x$ for all $x\geq 1$,
$\frac{\phi(x)}{x} \downarrow 0$ and 
$\phi(x) - x \phi'(x) \uparrow \infty$ when $x \to \infty$.
The function $H_{\phi}$ is defined, for all $u \geq 1$ by
$$H_{\phi}(u) = \int_1^u \frac{ds}{\phi(s)}, $$
and we consider the corresponding inverse function $H_{\phi}^{-1}:[0,\infty)\to[1,\infty)$.
\begin{lemma} The following inequality holds:
\begin{equation}\label{huu}
\Hpi(s+t)\leq\Hpi(s)\Hpi(t) \quad \hbox{for all $s,t\geq0$.}
\end{equation}
\end{lemma}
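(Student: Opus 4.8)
The plan is to establish the submultiplicativity of $\Hpi$ by differentiating the quantity $\Hpi(s+t)/\Hpi(t)$ in $t$ and showing it is non-increasing, using the convexity/concavity structure encoded in $\phi$. First I would record the basic derivative identities coming from the definition of $H_\phi$: since $H_\phi'(u) = 1/\phi(u)$, the inverse function satisfies $(\Hpi)'(x) = \phi(\Hpi(x))$ for all $x \geq 0$. In particular $\Hpi$ is $C^1$, strictly increasing, with $\Hpi(0) = 1$.

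Next I would fix $s \geq 0$ and consider the function $g(t) = \Hpi(s+t) - \Hpi(s)\Hpi(t)$ for $t \geq 0$. We have $g(0) = \Hpi(s) - \Hpi(s)\Hpi(0) = \Hpi(s) - \Hpi(s) = 0$, so it suffices to show $g'(t) \leq 0$ for all $t \geq 0$. Using the derivative identity,
\[
g'(t) = \phi(\Hpi(s+t)) - \Hpi(s)\,\phi(\Hpi(t)).
\]
So the claim reduces to the inequality $\phi(\Hpi(s+t)) \leq \Hpi(s)\,\phi(\Hpi(t))$, i.e., writing $a = \Hpi(t) \geq 1$ and $b = \Hpi(s+t) \geq a$, to showing $\phi(b) \leq (b\text{-dependent factor})\,\phi(a)$; more precisely that $\phi(\Hpi(s+t))/\phi(\Hpi(t)) \leq \Hpi(s)$.

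The main obstacle is this last pointwise inequality on $\phi$, and I expect to handle it by reducing it in turn to a statement about $H_\phi$ and exploiting concavity of $\phi$. One clean route: since $\phi$ is concave with $\phi(x) \leq x$, the map $x \mapsto \phi(x)/x$ is non-increasing, hence $\phi$ is subadditive-like; more usefully, I would show the function $t \mapsto \log \phi(\Hpi(t))$ has derivative $\phi'(\Hpi(t)) \leq 1$ (because $\phi(x) \leq x$ and concavity give $\phi'(x) \leq \phi(x)/x \cdot$ ... actually $\phi'(x) \le \phi(1)/1$? no — rather $\phi' $ is non-increasing and $\phi'(x) \le \phi(x)/(x-1)$-type bounds), so that $\log\phi(\Hpi(s+t)) - \log\phi(\Hpi(t)) = \int_t^{s+t}\phi'(\Hpi(u))\,du \leq \int_t^{s+t} 1 \, du$; comparing this with $\log \Hpi(s) = \int_0^s \frac{(\Hpi)'(v)}{\Hpi(v)}dv = \int_0^s \frac{\phi(\Hpi(v))}{\Hpi(v)}dv$ is not immediately $\geq s$ since $\phi(\Hpi(v))/\Hpi(v) \leq 1$. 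Thus the naive bound is too lossy, and the correct argument must instead directly compare $g'(t)$ with $0$ by a monotonicity argument on $g'$ itself: compute $g''$ or show $g'$ is non-increasing using that $\phi' \leq 1$ combined with $g'(0) = \phi(\Hpi(s)) - \Hpi(s)\phi(1) \leq 0$ (which holds because $\phi(x) \leq x = x\phi(1)/\phi(1)$ and $\phi(1) > 0$ give $\phi(\Hpi(s)) \leq \Hpi(s) \leq \Hpi(s)\phi(1)$ precisely when $\phi(1) \geq 1$ — not guaranteed). I would therefore normalize or, more robustly, prove the equivalent integrated inequality $H_\phi(xy) \geq H_\phi(x) + H_\phi(y)$ for $x,y \geq 1$ (which is dual to \eqref{huu}), by fixing $x$ and differentiating in $y$: $\frac{d}{dy}H_\phi(xy) = \frac{x}{\phi(xy)}$ versus $\frac{d}{dy}H_\phi(y) = \frac{1}{\phi(y)}$, so it reduces to $\phi(xy) \leq x\phi(y)$ for $x, y \geq 1$, which follows from concavity of $\phi$ together with $\phi(0^+)$-positivity: indeed $\phi(xy)/\phi(y) \le$ is controlled since $t \mapsto \phi(ty)/t$ is non-increasing in $t \geq 1$ when $\phi$ is concave with $\phi \geq 0$ — precisely, $\phi(ty) = \phi(t y) \le t \phi(y)$ because $\phi$ concave with $\phi(0) \ge 0$ implies $\phi(\lambda u) \geq \lambda \phi(u)$ for $\lambda \in [0,1]$, equivalently $\phi(u) \le (1/\lambda)\phi(\lambda u)$, i.e. $\phi(ty) \le t\phi(y)$ taking $\lambda = 1/t$, $u = ty$. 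Checking at the boundary ($y=1$ needs $H_\phi(x) \geq H_\phi(x) + H_\phi(1) = H_\phi(x)$, fine) closes the argument, and then applying $H_\phi^{-1}$ — which is increasing — to $H_\phi(\Hpi(s)\Hpi(t)) \geq H_\phi(\Hpi(s)) + H_\phi(\Hpi(t)) = s + t = H_\phi(\Hpi(s+t))$ yields \eqref{huu}.
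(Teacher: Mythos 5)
Your final argument is correct and is essentially the paper's proof in dual form: the paper shows that $\ln \Hpi$ is subadditive by differentiating $h(s,t)=\ln\Hpi(s+t)-\ln\Hpi(s)-\ln\Hpi(t)$ in $t$, while you prove the equivalent superadditivity $H_{\phi}(xy)\geq H_{\phi}(x)+H_{\phi}(y)$ for $x,y\geq 1$ by differentiating in $y$, and in both cases the crux is the same fact that $\phi(x)/x$ is non-increasing, i.e. $\phi(\kappa x)\leq \kappa\phi(x)$ as in \eqref{hhu}. The only blemish is your justification of $\phi(xy)\leq x\phi(y)$ via concavity together with ``$\phi(0)\geq 0$'': $\phi$ is only defined on $[1,\infty)$, so that convex-combination step is not literally available, but this is harmless since the standing hypothesis $\frac{\phi(x)}{x}\downarrow 0$ yields $\frac{\phi(xy)}{xy}\leq\frac{\phi(y)}{y}$ directly.
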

\begin{proof}
Set $g(\cdot) := (\ln \circ \Hpi)(\cdot)$, and consider the function given, for all $s, t \geq 0$, by
\[h(s,t) := g(s+t) - g(s) - g(t). \]
For all $s \geq 0$, $h(s,0) = 0$ since $\Hpi(0) = 1$. Moreover, using that $(\Hpi)'(u) = (\phi \circ \Hpi)(u)$ for all $u \geq 0$,
\[\partial_t h(s,t) = \frac{\phi(\Hpi(t+s))}{\Hpi(t+s)} - \frac{\phi(\Hpi(t))}{\Hpi(t)} \leq 0, \]
using that $\frac{\phi(x)}{x} \downarrow 0$ as $x \to \infty$. Hence $h(s,t) \leq 0$ for all $s,t \geq 0$ and the conclusion follows by taking the exponential.
\end{proof}
An immediate study also shows that    
\begin{equation}\label{hhu}
\phi(\kappa x) \leq \kappa \phi(x) \quad \hbox{for all $x\geq 1$, all $\kappa\geq 1$}.
\end{equation}


\begin{lemma}
\label{lemma:subgeometric}
\bb 
The function $\phi \circ \Hpi$ is a rate function in the sense of Fort-Robert \cite{FortRoberts}, i.e. it is bounded on bounded intervals, non-decreasing with
\[  \frac{\ln(\phi \circ \Hpi(s))}{s} \downarrow 0, \]
as $s \to \infty$. In particular,
\begin{align}
\label{eq:ineqPhiCircH}
\phi \circ \Hpi(t+s) \le 2 \Big(\phi \circ \Hpi(s) \Big) \Big( \phi \circ \Hpi(t) \Big), \qquad \forall s, t \ge 0. 
\end{align}
\end{lemma}

\begin{proof}
\bb 
Set, for all $s \ge 0$, $g(s) := \ln(\Hpi(s))$, $h(s) := \frac{\ln(\phi \circ \Hpi(s))}{s}$. 

\begin{enumerate}
\item \textbf{Proof that $\lim \limits_{s \to \infty} h(s) = 0$.} We have $g'(s) = \frac{\phi \circ \Hpi(s)}{\Hpi(s)} \to 0$ as $s \to \infty$ by hypothesis on $\phi$. Therefore $\frac{g(s)}{s} \to 0$ as $s \to \infty$ (using L'Hospital rule), and since $\Hpi(s) \ge 1$ for all $s \ge 0$, since $\phi(x) \le x$ for all $x \ge 1$ and using that $\phi$ increases to infinity,
\[ 0 \le h(s) \le \frac{g(s)}{s}, \]
and we conclude that $h(s) \to 0$ as $s \to \infty$.
\item \textbf{Proof that $h$ is non-increasing outside a compact set.}
We first have
\begin{align*}
h'(s) = \frac{\phi'(\Hpi(s))}{s} - \frac{h(s)}{s}.
\end{align*} 
We have, for all $s > 0$ (so that $\Hpi(s) > 1$ and $\phi \circ \Hpi(s) > 1$),
\begin{align*}
\lim \limits_{s \to \infty} \frac{\phi'(\Hpi(s))}{h(s)} = \lim \limits_{s \to \infty} \frac{(s \phi'(\Hpi(s)))'}{(\ln(\phi \circ \Hpi(s)))'}
\end{align*}
using again l'Hospital rule, and 
\begin{align*}
\frac{(s \phi'(\Hpi(s)))'}{(\ln(\phi \circ \Hpi(s)))'} = 1 + \frac{s \phi''(\Hpi(s)) \phi \circ \Hpi(s)}{\phi'(\Hpi(s))} \le 1,
\end{align*}
since $\phi$ is strictly concave and increasing. Hence $h$ is non-increasing for $s$ large enough.
\item \textbf{Proof of \eqref{eq:ineqPhiCircH}.} We use a result from Thorisson \cite[Lemma 1]{Thorisson_Queue_1985} which applies to subgeometric rate functions greater than 2 with the choice $s \to 2\phi \circ \Hpi(s)$. \qedhere
\end{enumerate}
\end{proof}

We will use several times the following remark, based on the definition of $\tilde{\tau}_C^r$, see (\ref{DefTildeTauC}).
\begin{rmk}
\label{RmkHpiTauC}
For all $x \in E$ and all non-decreasing $C^1$ function $f:\RR_+\to\RR_+$ such that $f(0)=0$,
\[
\EE_x[f(\tilde{\tau}_C^r)] = \EE_x \Big[ \int_0^{\infty} e^{-r \int_0^s \mathbf{1}_C(X_u) du} f'(s)
ds \Big].
\]
Indeed, it suffices to use that $\EE_x[f(\tilde{\tau}_C^r)]=\int_0^\infty
\PP_x(\tilde{\tau}_C^r \geq s) f'(s) ds$ and that
$$
\PP_x(\tilde{\tau}_C^r \geq s)=\PP_x\Big(T\geq r \int_0^s \mathbf{1}_C(X_u) du\Big)=
\EE_x\Big[ e^{-r \int_0^s \mathbf{1}_C(X_u) du}\Big].
$$
\end{rmk}

\section{Proof of Theorem \ref{MainThm}}
\label{SectionEquivalence}

In this section, we give the proofs of the results stated in Theorem \ref{MainThm}.

\subsection{Proof that \hyperref[C3]{(C3)} implies Condition \hyperref[C2]{(C2)}}
\label{SubSec3imp2}
We introduce $\psi_0:\RR_+\times [1,\infty) \to [1,\infty)$
defined by $\psi_0(t,x) = \Hpi(H_{\phi}(x) + t)$.  This function is $C^1$ in its first argument $t$ and $C^2$
in its second argument. Moreover, for all $t\geq 0$, all $x\geq 1$,
$$ \partial_x \psi_0(t,x) = H_{\phi}'(x) (\Hpi)'(H_{\phi}(x) + t)  = \frac{\phi \Big( \Hpi \big( H_{\phi}(x) + t\big) \Big)}{\phi(x)} \geq 0. $$
Next, 
\begin{align*}
\partial_x^2 \psi_0(t,x) &= \frac{\phi' \Big( \Hpi \big( H_{\phi}(x) + t\big) \Big) \phi \Big( \Hpi \big( H_{\phi}(x) + t\big) \Big) - \phi'(x) \phi \Big( \Hpi \big( H_{\phi}(x) + t\big) \Big) }{\phi^2(x)}  \\
&= \frac{\phi \Big( \Hpi \big( H_{\phi}(x) + t\big) \Big)}{\phi^2(x)} \Big(  \phi' \Big( \Hpi \big( H_{\phi}(x) + t\big) \Big) - \phi'(x) \Big)\leq 0,
\end{align*} 
since the first factor is positive, while the second one is negative because $\phi'$ is
decreasing and $x \leq \Hpi(H_{\phi}(x) + t)$.
We conclude that $\psi_0$ satisfies the assumption of Corollary \ref{CorollHairer}.
We set $\psi(t,x) = 2\psi_0(t,V(x)) - \Hpi(t)$. \bb On the one hand, using $\phi(x) \le x$ for all $x \ge 1$ and \eqref{huu},
\[ 
\Hpi(t) = 2 \Hpi(t) - \Hpi(t) \leq 2\psi_0(t,V(x)) - \Hpi(t) = \psi(t,x) \le \psi(0,x) \Hpi(t)
\] 
\dd  for all $t \geq 0$, all
$x \in E$, and,  using Corollary \ref{CorollHairer} and that $(\Hpi)' = \phi \circ \Hpi$, one has
\begin{align*}
(\partial_t& + \L) \psi(t,x) \leq 2 \partial_t \psi_0(t,V(x)) + 2 \partial_x \psi_0(t,V(x)) \L V(x) - \phi (\Hpi(t)) \\
&= 2 \phi \Big( \Hpi \big(H_{\phi}(V(x)) + t\big) \Big) + 2\frac{\phi \Big( \Hpi \big(H_{\phi}(V(x)) + t\big) \Big)}{\phi(V(x))} \L V(x) - \phi( \Hpi(t)) \\
&\leq 2\phi \Big( \Hpi \big(H_{\phi}(V(x)) + t\big) \Big) + 2\frac{\phi \Big( \Hpi \big(H_{\phi}(V(x)) + t\big) \Big)}{\phi(V(x))} (- \phi(V(x)) + K \mathbf{1}_C(x) ) - \phi( \Hpi(t)) \\
&\leq 2K \frac{\phi \Big( \Hpi \big(H_\phi(V(x)) + t\big) \Big)}{\phi(V(x))} \mathbf{1}_C(x) - \phi ( \Hpi(t)),
\end{align*} 
where we used the bound on $\L V$ from condition \hyperref[C3]{(C3)}. \bb Recall that, by hypothesis, $C$ is compact and $V$ is continuous. Since $\phi$ is also continuous with $\phi(x) \le x$ for all $x \ge 1$, we have, for some $K_2 > 1$
\[
\sup_{x \in C} \frac{V(x)}{\phi(V(x))} \le K_2.
\]
Using now \eqref{huu} and \eqref{hhu} (recall that $\Hpi(t)\geq 1$), we conclude that 
$$ (\partial_t + \L) \psi(t,x) \leq 2K \frac{\phi \Big(\Hpi(t)V(x)\Big)}{\phi(V(x))} \mathbf{1}_C(x) - \phi( \Hpi(t)) \leq 2KK_2 \phi(\Hpi(t)) \mathbf{1}_C(x) - \phi( \Hpi(t)). $$
\dd 
We also have $\psi(0,x) = 2V(x) - 1$, so that indeed $\sup_{x \in C}  \psi(0,x)<\infty$
(because $C$ is compact and $V$ has precompact sublevel sets), and, using that $\L 1 = 0$, recalling \hyperref[C3]{(C3)}, that $V\geq 1$ and that $\phi$ is non-decreasing,
\[ \L \psi(0,x) = 2K \mathbf{1}_C(x) - 2 \phi(V(x)) \leq 2K \mathbf{1}_C(x) - 2 \phi(1), \]
which completes the proof.

\subsection{Proof that \hyperref[C2]{(C2)} implies \hyperref[C1]{(C1)}}

 Let $x \in E$ and set, for all $t \geq 0$, 
\begin{align*}
M_t = \psi(t,X_t) - \psi(0,x) - \bb \kappa \int_0^t \mathbf{1}_C(X_s) \phi(\Hpi(s)) ds \dd 
+ \int_0^t \phi (\Hpi(s)) ds,
\end{align*}
\bb then by \hyperref[C2]{(C2)}, $(M_t)_{t \geq 0}$ is a $\PP_x$-local supermartingale starting at 0\dd. Hence there exists an increasing to infinity sequence $(\sigma_i)_{i \geq 1}$ of stopping times such that for all $i \geq 1$, $(M_{t \wedge \sigma_i})_{t \geq 0}$ is a bounded supermartingale. 

\vspace{.5cm}

\textbf{Step 1.} We introduce the stopping time 
\begin{align*}
\tilde{\tau}^1 = \inf \Big\{t \geq 0, \int_0^t \mathbf{1}_C(X_u) du \geq \frac{1}{2 \kappa} \Big\}, 
\end{align*}
and note that $X_{\tilde{\tau}^1} \in C$ almost surely. In this step, we show that for all $x \in E$,
\[ \EE_x[\Hpi(\tilde{\tau}^1)] \leq \bb 2 (\psi(0,x) + 1). \] 
\bb
For all $i \geq 1$, using that $\psi \ge 0$ and the definition of $(M_t)_{t \ge 0}$,
\begin{align*}
0
&\leq \EE_x[\psi(\tilde{\tau}^1 \wedge \sigma_i,X_{\tilde{\tau}^1 \wedge \sigma_i})]\\
&= \EE_x \Big[\psi(0,x) + \kappa \int_0^{\tilde{\tau}^1 \wedge \sigma_i} \mathbf{1}_C(X_u)
  \phi(\Hpi(u)) du - \int_0^{\tilde{\tau}^1 \wedge \sigma_i} \phi(\Hpi(s)) ds + M_{\tilde{\tau}^1 \wedge \sigma_i} \Big].
  \end{align*}
  Hence, using that $\tilde{\tau}^1 \wedge \sigma_i \leq \tilde{\tau}^1$, that $\phi \circ \Hpi$ is non-decreasing, that $\phi \circ \Hpi(t) \le \Hpi(t)$ for all $t \ge 0$ and the definition of $\tilde{\tau}^1$,
  \begin{align*}
  \EE_x \Big[\int_0^{\tilde{\tau}^1 \wedge \sigma_i} \phi(\Hpi(u)) du \Big]
&\leq  \psi(0,x) + \kappa \EE_x \Big[ \int_0^{\tilde{\tau}^1 \wedge \sigma_i} \mathbf{1}_C(X_u)
  \phi(\Hpi(u)) du\Big]\\
&\leq \psi(0,x) + \kappa \EE_x \Big[ \phi(\Hpi(\tilde{\tau}^1 \wedge \sigma_i)) \int_0^{\tilde{\tau}^1} \mathbf{1}_C(X_u) du \Big] \\
&\le \psi(0,x) + \frac{1}{2} \EE_x[\Hpi(\tilde{\tau}^1 \wedge \sigma_i)].
\end{align*}
Since $(\Hpi)' = \phi \circ \Hpi$ and $\Hpi(0) = 1$, we obtain that for all $i \geq 1$,
\[ \EE_x[\Hpi(\tilde{\tau}^1 \wedge \sigma_i)] \leq 2 (\psi(0,x) + 1), \]
and an application of the monotone convergence theorem allows us to conclude.

\vspace{.5cm}

\dd

\textbf{Step 2.} We consider the quantity defined for all $x \in E$, for $\rho \geq 0$ and $r > 0$ by
\[A_{x,\rho,r} := \EE_x \Big[ \int_0^{\infty} e^{-r \int_0^s \mathbf{1}_C(X_u) du} (\Hpi)'(s) e^{-\rho s^2} ds \Big] \]
which is finite because $(\Hpi)'(s)=\phi(\Hpi(s))\leq \Hpi(s)$, whence
\[ \phi(\Hpi(s)) \le \Hpi(s) \leq \Hpi(0)e^s=e^s. \]
 We have
\begin{align*}
A_{x,\rho,r} &= \EE_x \Big[ \int_0^{\tilde{\tau}^1} e^{-r \int_0^s \mathbf{1}_C(X_u) du} (\Hpi)'(s) e^{-\rho s^2} ds \Big]   + \EE_x \Big[ \int_{\tilde{\tau}^1}^{\infty} e^{-r \int_0^s \mathbf{1}_C(X_u) du} (\Hpi)'(s) e^{-\rho s^2} ds \Big] \\
&\leq \EE_x \Big[ \int_0^{\tilde{\tau}^1} (\Hpi)'(s) ds \Big] + \EE_x \Big[ \int_{\tilde{\tau}^1}^{\infty} e^{-r \int_0^{\tilde{\tau}^1} \mathbf{1}_C(X_u) du} e^{-r \int_{\tilde{\tau}^1}^s \mathbf{1}_C(X_u) du} (\Hpi)'(s) e^{-\rho s^2} ds \Big]
\\
&\le \EE_x[\Hpi(\tilde{\tau}^1)] + \EE_x \Big[ e^{-r \int_0^{\tilde{\tau}^1} \mathbf{1}_C(X_u) du}
  \int_{\tilde{\tau}^1}^{\infty} e^{-r \int_{\tilde{\tau}^1}^s \mathbf{1}_C(X_u) du} (\Hpi)'(s) e^{- \rho s^2} ds \Big].
\end{align*}
Using the strong Markov property
\begin{align*}
A_{x,\rho,r} &\le \EE_x[\Hpi(\tilde{\tau}^1)]  \\
&\qquad + \EE_x \Big[ e^{-r \int_0^{\tilde{\tau}^1} \mathbf{1}_C(X_u) du} \EE_{X_{\tilde{\tau}^1}} \Big[ \int_0^{\infty} e^{-r \int_0^s \mathbf{1}_C(X_u) du} (\Hpi)'(\tilde{\tau}^1 + s) e^{- \rho (s + \tilde{\tau}^1)^2} ds \Big] \Big] \\
&\le \EE_x[\Hpi(\tilde{\tau}^1)] + \EE_x \Big[ e^{-r \int_0^{\tilde{\tau}^1} \mathbf{1}_C(X_u) du} \Hpi(\tilde{\tau}^1) \EE_{X_{\tilde{\tau}^1}} \Big[ \int_0^{\infty} e^{-r \int_0^s \mathbf{1}_C(X_u) du} (\Hpi)'( s) e^{- \rho s^2} ds \Big] \Big]
\end{align*}
because $(\Hpi)'(\tilde{\tau}^1 + s)=\phi(\Hpi(\tilde{\tau}^1 + s))\leq\phi(\Hpi(\tilde{\tau}^1)\Hpi(s))\leq \Hpi(\tilde{\tau}^1)\phi(\Hpi(s))$ by (\ref{huu}) and  (\ref{hhu}).
Using the definition of $A_{x,\rho,r}$ and the fact that $X_{\tilde{\tau}^1} \in C$, we conclude that 
\begin{align}
\label{IneqAxrho}
A_{x,\rho,r} &\leq \EE_x[\Hpi(\tilde{\tau}^1)] +\EE_x \Big[ e^{-r \int_0^{\tilde{\tau}^1} \mathbf{1}_C(X_u) du} \Hpi(\tilde{\tau}^1) \Big]\sup_{y \in C} A_{y,\rho,r}.
\end{align}

\vspace{.5cm}

\textbf{Step 3.} We now prove that there is $r_0>0$ (large) such that
\[ \sup_{x \in C} \EE_x \Big[ e^{-r_0 \int_0^{\tilde{\tau}^ 1} \mathbf{1}_C(X_u) du} \Hpi(\tilde{\tau}^1) \Big] \leq \frac{1}{2}. \]
By definition of $\tilde{\tau}^1$,
$\int_0^{\tilde{\tau}^1} \mathbf{1}_C(X_u) du= \frac{1}{2\kappa}$ a.s.
Hence, for all $x \in E$,
\bb
\begin{align*}
\EE_x \Big[ e^{-r_0 \int_0^{\tilde{\tau}^ 1} \mathbf{1}_C(X_u) du} \Hpi(\tilde{\tau}^1) \Big]
= \EE_x \Big[ e^{-\frac{r_0}{2\kappa}} \Hpi(\tilde{\tau}^1) \Big] \leq 2 e^{-\frac{r_0}{2\kappa}}(\psi(0,x) + 1)
\end{align*}
by Step 1. Since $0 < \sup_{x\in C} \psi(0,x) \le \kappa < \infty$ by assumption, the conclusion follows.

\vspace{.5cm}

\dd 
\textbf{Step 4.} Coming back to (\ref{IneqAxrho}), choosing $r=r_0$ given by Step 3 and taking the supremum
over $x \in C$ on both sides and using Step 3, we find
\[ \sup_{x \in C} A_{x,\rho,r_0} \leq \sup_{x \in C} \EE_x[\Hpi(\tilde{\tau}^1)] +
\frac{1}{2} \sup_{x \in C} A_{x,\rho,r_0}, \]
so that, using Step 1 and that $\psi(0,\cdot) \leq \kappa$ on $C$,
\begin{align}
\label{eq:bound_Axrho}
 \bb  \sup_{x \in C} A_{x, \rho,r_0} \leq 4 (\kappa + 1). 
 \end{align}
We now apply Fatou's lemma and Remark \ref{RmkHpiTauC},
\bb
\begin{align*}
\sup_{x \in C} \EE_x[\Hpi(\tilde{\tau}_C^{r_0})] &= \sup_{x \in C} \EE_x \Big[ \int_0^{\infty} e^{-r_0 \int_0^s \mathbf{1}_C(X_u) du} (\Hpi)'(s) ds \Big]
\leq \sup_{x \in C} \liminf \limits_{\rho \to 0} A_{x,\rho,r_0} \le 4 (\kappa+1).
\end{align*}

\vspace{.5cm}

\textbf{Conclusion}
We come back to (\ref{IneqAxrho}) using the results of Step 1 and Step 4. For all $x \in E$, using that $e^{-r_0 \int_0^{\tilde{\tau}^1} \mathbf{1}_C(X_u) d u} \le 1$ and \eqref{eq:bound_Axrho},
\begin{align*}
A_{x,\rho,r_0} &\leq \EE_x[\Hpi(\tilde{\tau}^1)] + \EE_x \Big[ e^{-r_0 \int_0^{\tilde{\tau}^1} \mathbf{1}_C(X_u) du} \Hpi(\tilde{\tau}^1) \Big] \sup_{x \in C} A_{x,\rho,r_0} \\
&\leq \EE_x[\Hpi(\tilde{\tau}^1)] (5 + 4\kappa) \\
&\leq 2 (\psi(0,x) +1)  (5 + 4 \kappa).
\end{align*}
Hence, as in Step 4,
\begin{align*}
  \EE_x[\Hpi(\tilde{\tau}_C^{r_0})] &= \EE_x \Big[ \int_0^{\infty} e^{-r_0 \int_0^s \mathbf{1}_C(X_u) du} (\Hpi)'(s) ds \Big]\leq \liminf \limits_{\rho \to 0} A_{x,\rho,r_0} \leq  2 (\psi(0,x) +1)  (5 + 4 \kappa),
\end{align*}
and the conclusion follows.
\dd 

\subsection{Proof that \hyperref[C1]{(C1)} implies \hyperref[C2]{(C2)}}

We fix $r>0$ so that \hyperref[C1]{(C1)} holds and recall that the randomized hitting time is given by
$$ \tilde{\tau}_C^r = \inf \Big\{t > 0, \int_0^t \mathbf{1}_C(X_s) ds > \frac{T}{r} \Big\}, $$
where $T$ is a random variable with exponential law of parameter 1 independent of
everything else. For the sake of simplicity we will omit the superscript $r$ in what follows
and write $\tilde{\tau}_C=\tilde{\tau}_C^r$. Our goal is to show that
$$ 
\psi(t,x)=\EE_x \Big[\Hpi(\tilde{\tau}_C + t) \Big] =
\EE_x \Big[\int_0^{\infty} e^{-r \int_0^s \mathbf{1}_C(X_u) du} (\Hpi)'(s + t) ds \Big]
$$
satisfies \hyperref[C2]{(C2)}. Here, the second equality follows from Remark \ref{RmkHpiTauC}.

\vspace{.5cm}

\bb We of course have, for all $t \ge 0$ and $x \in E$
\[ \bb 
\Hpi(t) \le \psi(t,x) \le \Hpi(t) \psi(0,x) \]
using \eqref{huu} and \eqref{hhu}. Moreover,
$\kappa=\sup_{x\in C} \psi(0,x)$ is finite since
\begin{align*}
\psi(0,x) = \EE_x[\Hpi(\tilde{\tau}_C)],
\end{align*}
which is uniformly bounded on $C$ by assumption.
\dd 

\vspace{.5cm}

Consider a sequence $(\varphi_{\epsilon})_{\epsilon > 0}$ of continuous
functions such that $\varphi_{\epsilon}(x) \downarrow \mathbf{1}_C(x)$
and $\e\leq \vpe(x) \leq 1$ for all $x \in E$. 
This is possible because $C$ is compact.
We set, for all $\epsilon > 0$, 
\[
\psi_{\epsilon}(t,x) = \EE_x \Big[\int_0^{\infty} e^{-r\int_0^s \varphi_{\epsilon}(X_u) du} (\Hpi)'(s + t) ds \Big].
 \]

\vspace{.5cm}

\textbf{Step 1: Computation of $(\partial_t + \L) \psi_{\epsilon}(t,x)$}.
We first have, for $(t,x) \in \RR_+ \times E$,
\begin{align*}
\partial_t \psi_{\epsilon}(t,x) = \EE_x \Big[\int_0^{\infty} e^{-r\int_0^s \varphi_{\epsilon}(X_u) du} (\Hpi)''(s + t) ds \Big].
\end{align*}
This is easily justified, using that $\varphi_\e\geq \mathbf{1}_C(x)$ and
that 
\begin{align*}
 \EE_x \Big[\int_0^{\infty} e^{-r\int_0^s \mathbf{1}_C(X_u) du} (\Hpi)''(s + t) ds \Big]
&=\EE_x[(\Hpi)'(\tilde{\tau}_C + t)] \\
&\leq \phi( \Hpi(t))\EE_x[\Hpi(\tilde{\tau}_C)] <\infty 
\end{align*}
by assumption. We used that 
$(\Hpi)'(s+t)=\phi(\Hpi(s+t)) \leq \phi(\Hpi(t)) \Hpi(s)$ by \eqref{huu}.

\vspace{0.5cm}
\bb 
For the second part of the computation, we use the strong generator. \dd  We fix $t\geq 0$ and recall that
$$
\L \psi_\e(t,x)=\lim_{v \to 0} \frac1v \big(\EE_x[\psi_\e(t,X_v)] - \psi_\e(t,x) \big).
$$
For $v > 0$, we have
\begin{align*}
\EE_x[\psi_\e(t,X_v)]&= \EE_x\Big( \EE_{X_v} \Big[\int_0^{\infty} e^{-r\int_0^s \vpe(X_u) du} (\Hpi)'(s + t) ds \Big]\Big)\\
&=\EE_{x} \Big[\int_0^{\infty} e^{-r \int_0^s \vpe(X_{u+v}) du} (\Hpi)'(s + t) ds \Big]\\
&=\EE_{x} \Big[\int_0^{\infty} e^{-r \int_v^{s+v} \vpe(X_{u}) du} (\Hpi)'(s + t) ds \Big].
\end{align*}
\dd 
Noting that
\begin{align*}
\int_v^{s+v} \vpe(X_u) du = \int_0^s \vpe(X_u) du - \int_0^v \vpe(X_u) du + \int_s^{s+v} \vpe (X_u) du,
\end{align*} 
we find
\begin{align*}
&\EE_x[\psi_\e(t,X_v)]-\psi_\e(t,x)\\ \qquad &=
\EE_{x}\Big[\int_0^{\infty} e^{-r \int_0^{s} \vpe(X_{u}) du} (\Hpi)'(s + t)  \Big(e^{r \int_0^v \vpe(X_u) du} e^{- r\int_s^{s+v} \vpe(X_u) du} - 1 \Big) ds \Big]. 
\end{align*}
\dd 
We easily conclude by dominated convergence, using that $\indiq_C\leq\vpe\leq 1$
and that
$$
\bb
\EE_{x}\Big[\int_0^{\infty} e^{-r \int_0^{s} \vpe(X_{u}) du} (\Hpi)'(s + t)d s\Big]
\le \EE_x[\Hpi(\tilde{\tau}_C + t)]\leq \Hpi(t)\EE_x[\Hpi(\tilde{\tau}_C)] <\infty,
$$
that
\begin{align*}
\L \psi_{\epsilon}(t,x) = &\lim \limits_{v \to 0} \frac1v\Big(\EE_x[\psi_\e(t,X_v)]-\psi_\e(t,x)\Big)\\
=& r\EE_x \Big[\int_0^{\infty} e^{-r\int_0^s \vpe(X_u) du} (\Hpi)'(s+t) (\vpe(x) - \vpe(X_s)) ds \Big] \\
=& r\vpe(x) \psi_{\epsilon}(t,x) -
r\EE_x \Big[\int_0^{\infty} e^{-r\int_0^s \vpe(X_u) du}  (\Hpi)'(s+t) \vpe(X_s) ds \Big].
\end{align*}
Note that $\partial_s (e^{-r\int_0^s \vpe(X_u) du}) = - r\vpe(X_s) e^{-r\int_0^s \vpe(X_u) du}$
a.s., so that, by integration by parts, 
\begin{align*}
r\EE_x \Big[\int_0^{\infty} e^{-r\int_0^s \vpe(X_u) du}  (\Hpi)'(s+t) \vpe(X_s) \Big] &= 
\EE_x \Big[ \big[-e^{-r\int_0^s \vpe(X_u) du}  (\Hpi)'(s+t)\big]^{\infty}_0 \Big] \\
&\quad + \EE_x \Big[\int_0^{\infty} e^{-r \int_0^s \vpe(X_u) du} (\Hpi)''(s+t) ds \Big].
\end{align*}
Using that $\vpe\geq \e$ and the properties of $\phi$ ($(\Hpi)'$ is subexponential),
one can check that
\[ \lim \limits_{s \to \infty} \EE_x \Big[  e^{-r \int_0^s \vpe(X_u) du} (\Hpi)'(s+t)  \Big]  = 0, \]
\dd
from which we conclude that
$$ \EE_x\Big[ \big[-e^{-r\int_0^s \vpe(X_u) du} (\Hpi)'(s+t) \big]^{\infty}_0 \Big] = (\Hpi)'(t)
= \phi(\Hpi(t)). $$
\dd 
We have proved that, in the sense of the strong generator (which a fortiori implies the result for the weak generator), 
\begin{align*}
 (\partial_t + \L) \psi_{\epsilon}(t,x) &= r \vpe(x) \psi_\e(t,x) - \phi(\Hpi(t)). 
\end{align*}
\bb
Note that 
\begin{align*}
\psi_{\e}(t,x) &= \EE_x \Big[\int_0^{\infty} e^{-r\int_0^s \varphi_{\epsilon}(X_u) du} (\phi \circ \Hpi)(s + t) ds \Big]  \\
&\le 2 \phi \circ \Hpi(t) \EE_x \Big[\int_0^{\infty} e^{-r\int_0^s \varphi_{\epsilon}(X_u) du} (\Hpi)'(s) ds \Big] = 2 \phi \circ \Hpi(t) \psi_{\e}(0,x),
\end{align*}
where we used \eqref{eq:ineqPhiCircH}. We thus conclude that
\begin{align}
\label{IneqPsiEps}
(\partial_t + \L) \psi_{\epsilon}(t,x) &\le 2r \phi(\Hpi(t)) \vpe(x) \psi_\e(0,x) - \phi(\Hpi(t)).
\end{align}
\dd 

\vspace{.5cm}

\textbf{Step 2: limit as $\epsilon \to 0$.}
By (\ref{IneqPsiEps}), we know that
$$
\bb
M_t^\e = \psi_{\epsilon}(t, X_t) - \psi_{\epsilon}(0,x) - 2r \int_0^t \vpe(X_s) \phi (\Hpi(s))
\psi_{\epsilon}(0,X_s) ds + \int_0^t \phi(\Hpi(s)) ds
$$
is a local supermartingale for each $\epsilon>0$, and we want to check that
$$
\bb
M_t = \psi(t, X_t) - \psi(0,x) - 2r \int_0^t \indiq_C(X_s) \phi(\Hpi(s))
\psi(0,X_s) ds + \int_0^t \phi( \Hpi(s)) ds
$$
is also a local supermartingale.

\vspace{.5cm}
It classically suffices to check that for all $T>0$, $\sup_{[0,T]}|M^\e_t-M_t|\to 0$ a.s.
as $\e\to 0$.
To this aim, the only issue is to verify that for all $T>0$, all compact subset $K\subset E$,
\begin{equation}\label{cqdf}
\sup_{[0,T]\times K}|\psi_\e(t,x)-\psi(t,x)| \to 0.
\end{equation}

Recalling that $\vpe\geq\indiq_C$ and that $(\Hpi)'$ is non-decreasing, we observe that
by definition of $\psi_\e$ and $\psi$, it holds that 
$$
\sup_{[0,T]}|\psi_\e(t,x)-\psi(t,x)|=\psi(T,x)-\psi_\e(T,x).
$$
Since now $\vpe \downarrow \mathbf{1}_C$ pointwise, we deduce from the monotone convergence
theorem that for each $x\in E$,
\begin{align*}
\psi_{\epsilon}(T,x) = \EE_x \Big[\int_0^{\infty} e^{-\int_0^s \vpe(X_u) du} (\Hpi)'(T+s) ds \Big] \overset{\epsilon \to 0}{\uparrow}
&\EE_x \Big[\int_0^{\infty} e^{-\int_0^s \mathbf{1}_C(X_u)du} (\Hpi)'(T+s) ds \Big] \\
&= \EE_x[\Hpi(\tilde{\tau}_C + T)] = \psi(T,x). 
\end{align*}
By \cite[Theorem 17.25]{Kallenberg02}, it follows from the Feller property that when
$y \to x$, the process $(X_t^y)_{t \geq 0}$ with semigroup $(\P_t)_{t \geq 0}$ and $X^y_0 = y$
converges in
distribution, in the Skorokhod space $\mathbb{D}([0,\infty),E)$,
towards the process $(X_t^x)_{t \geq 0}$ with semigroup $(\P_t)_{t \geq 0}$ and $X_0^x = x$.
We easily deduce the continuity in $x$ of $\psi_{\epsilon}(T,x)$ and $\psi(T,x)$.
We then may use Dini's theorem to conclude that, as desired,
$$
\sup_{x\in K} [\psi(T,x)-\psi_\e(T,x)] \to 0
$$
as $\e\to 0$, for any compact $K$ of $E$.

\vspace{.3cm}

\textbf{Step 3 : Conclusion.} It remains to verify that
\[ \L \psi(0,x) \leq \kappa \mathbf{1}_C(x) - \eta.\]
Using Step 1 with $t = 0$, we have
\bb
\begin{align*}
\L \psi_{\epsilon}(0,x) =& 2r \phi((\Hpi(0)) \vpe(x) \psi_{\epsilon}(0,x) - \EE_x \Big[ \int_0^{\infty} e^{- r \int_0^s \vpe(X_u) du} (\Hpi)''(s) ds \Big] - \phi(\Hpi(0))\\
\leq & 2r \phi(1) \vpe(x) \psi_{\epsilon}(0,x) - \phi(1).
\end{align*}
\dd 
We throwed away the non-negative expectation and used that $\Hpi(0)=1$.
Using the same limit procedure as in Step 2 (through local supermartingales), we conclude that
$$
\bb 
\L \psi(0,x) \leq 2r \phi(1) \indiq_C(x) \psi(0,x) -\phi(1)
$$
and conclude using that $\psi(0,x)$ is bounded on $C$.

\subsection{Proof of the result from \hyperref[C2]{(C2)}} \

\vspace{.5cm} 

\textbf{Existence of an invariant measure. } According to
\cite[Theorems 5 and 6]{SurveyMeynTweedie}, an invariant probability measure $\pi$ exists as soon as
there exist a petite set $C$, a constant $b>0$ and a continuous function
$W: E \to [0,\infty)$ such that 
\[ \L W(x) \leq -1 + b \mathbf{1}_C(x). \]
It directly follows from \hyperref[C2]{(C2)} that $W(x) := \frac{\psi(0,x)}{\eta}$ is convenient.
Moreover, by \cite[Theorem 7]{SurveyMeynTweedie}, for all $x \in E$,
\begin{align}
\label{Ergodicity}
\|\P_t(x,\cdot) - \pi(\cdot)\|_{TV} \to 0, \quad \hbox{ as } t \to \infty. 
\end{align}

\vspace{.5cm} 

\textbf{Convergence result}
By \cite[Theorem 1]{FortRoberts}, with $f_* = 1$ and $r_*(s) = \phi (\Hpi(s))$
for all $s \geq 0$, $\Psi_1(u)=u$ and $\Psi_2(v)=1$, it suffices to verify the following
three conditions.

\vspace{.5cm} 

(a) $r_*$ is a rate function in the sense of \cite{FortRoberts}, i.e.  $\lim_{s\to\infty} \frac1s\log(r_*(s))=0$. \bb This is given by Lemma \ref{lemma:subgeometric}. \dd 

\vspace{.5cm} 

(b) There is $t_0>0$ such that the Markov chain with matrix $\P_{t_0} $ is irreducible.  This follows from (\ref{Ergodicity}) and \cite[Theorem 6.1]{MTContinuous2}.

\dd

\vspace{.5cm} 

(c) There is $\delta>0$ such that, with the petite set $C$ of \hyperref[C2]{(C2)}
and recalling that $$\tau_C(\delta) = \inf \{t \geq  \delta, X_t \in C\},$$
$$
\sup_{x \in C} \EE_x \Big[ \int_0^{\tau_C(\delta)} 1 ds \Big] + \sup_{x \in C} \EE_x
\Big[ \int_0^{\tau_C(\delta)} \phi(\Hpi(s)) ds \Big] < \infty.
$$

Since $\phi$ is bounded from below, it suffices to study the second term.
By the usual supermartingale argument, recalling the condition on $\psi$, we have
\begin{align*}
\bb 
\EE_x[\psi(\tau_C(\delta),X_{\tau_C(\delta)})] \leq \psi(0,x) + \kappa \EE_x \Big[ \int_0^{\tau_C(\delta)} \mathbf{1}_C(X_s) \phi(\Hpi(s)) ds \Big] - \EE_x \Big[ \int_0^{\tau_C(\delta)} \phi( \Hpi(s)) ds \Big].
\end{align*}
\bb Since now $\int_0^{\tau_C(\delta)} \phi(\Hpi(s)) \mathbf{1}_C(X_s) ds = \int_0^{\delta} \phi(\Hpi(s)) \mathbf{1}_C(X_s) ds \leq \phi(\Hpi(\delta)) \delta$, we conclude that
\[ \EE_x \Big[ \int_0^{\tau_C(\delta)} \phi(\Hpi(s)) ds \Big] \leq \psi(0,x) + \kappa \delta \phi(\Hpi(\delta)). \]
\dd 
Since $\psi(0,\cdot)$ is bounded on $C$ by assumption, we conclude with e.g. $\delta=1$.

\bb
\section{Examples}
\label{sec:examples}

\subsection{A regeneration model with polynomial moments}

On $(\RR_+,\mathcal{B}(\RR_+))$, given a sequence $(\zeta_i)_{i \ge 1}$ of i.i.d. random variables with law $M$ on $(\RR_+, \mathcal{B}(\RR_+))$, we consider the process given, for all $t, s \ge 0$, for $X_t \in \RR_+$, by
\begin{align*}
\left\{
\begin{array}{lll}
X_{t+s} &= X_t - s \qquad &\hbox{ if } s < X_t, \\
X_{t+s} &= \zeta_1 - (s-X_t) \qquad &\hbox{ if } 0 \le s - X_t < \zeta_1, \\
X_{t+s} &= \zeta_2 - (s - (\zeta_1 + X_t)) \qquad &\hbox{ if } 0 \le s - (\zeta_1 + X_t) < \zeta_2, \\
\dots
\end{array}
\right.
\end{align*}
More rigorously, this can be written as
\begin{align*}
X_{t+s} = \mathbf{1}_{\{ X_t \le s \}} \Big(\zeta_J - \big(s -  \sum_{i=1}^{J-1} \zeta_i - X_t  \big) \Big) + (X_t - s) \mathbf{1}_{\{X_t > s\}},
\end{align*}
where $J$ is a random index depending on $X_t$, $s$, and the sequence $(\zeta_i)_{i \ge 1}$ given by
\begin{align*}
J = \inf \Big\{k \ge 1, \zeta_k - \big(s - \sum_{i=1}^{k-1} \zeta_i - X_t) \ge 0 \Big\}. 
\end{align*}

Such a process appears mainly in the study of the free-transport operator associated with the pure diffuse boundary condition in kinetic theory, see \cite[Section 2]{Kuo_2013} and can be seen as a pathological instance of a Piecewise Deterministic Markov Process, see for instance \cite{Davis18}, in the sense that jumps are predictible rather than truly random (although the size of the jumps remains random). 

We will prove first that this process satisfies our set of conditions from Theorem \ref{MainThm}, more precisely we will show that \hyperref[C1]{(C1)} is satisfied. More interestingly, we prove that the usual Lyapunov condition, \hyperref[C3]{(C3)} in Theorem \ref{MainThm} is not satisfied for this process if we impose the further requirement that $V$ should be non-decreasing. 

In the whole section we assume that $\EE[X_0^{\alpha + \e}] + \EE[\zeta_1^{\alpha + \e}] < \infty$ for some $\alpha > 1$, $\epsilon > 0$ and we will consider $\phi(x) = x^{\frac{\alpha - 1}{\alpha}}$ for all $x \ge 1$.
\bb 

\subsubsection{Proof that \hyperref[C1]{(C1)} is satisfied}

Let $C = [0,1]$, let $\tilde{\tau}_C^r$ be defined as in Theorem 2 and let $\phi(x) = x^{\frac{\alpha - 1}{\alpha}}$. The associated inverse function is defined on $\RR_+$ by $H_{\phi}^{-1}(x) = (x+1)^{\alpha}$.

Let $x \in \RR_+$, we compute
\begin{align}
\label{eq:Hpi_ex_1}
\EE_x[\Hpi(\tilde{\tau}_C^r)] \le \Hpi(1) + \EE_x[\Hpi(\tilde{\tau}_C^r) \mathbf{1}_{\{\tilde{\tau}_C^r \ge 1\}}], 
\end{align}
and one only needs to show that for all $x \in \RR_+$,
\begin{align}
\label{eq:Hpi_ex_3}
\EE_x[\Hpi(\tilde{\tau}_C^r) \mathbf{1}_{\{\tilde{\tau}_C^r \ge 1\}}] < \infty.
\end{align}
We will also show that this quantity is uniformly bounded on $C$.
Using Remark \ref{RmkHpiTauC} and that $(\Hpi)'(s) = \phi(\Hpi(s)) = s^{\alpha - 1}$ for all $s \in \RR_+$, we have 
\begin{align}
\label{eq:Hpi_ex_2}
\EE_x[\Hpi(\tilde{\tau}_C^r) \mathbf{1}_{\{\tilde{\tau}_C^r \ge 1\}}] = \EE_x \Big[\int_1^{\infty} e^{-r\int_0^s \mathbf{1}_C(X_u) du} (s+1)^{\alpha-1} ds \Big]. 
\end{align} 
First, we observe that, letting $T_0 = 0$, $T_{k+1} = T_k + X_{T_k}$ (using that $(X_t)_{t \ge 0}$ is càdlàg), for all $k \ge 1$, using the strong Markov property,
\begin{align*}
\EE_x \Big[e^{-r \int_0^{T_{k}} \mathbf{1}_C(X_u) du}\Big] \le \EE_x\Big[e^{-r \int_{T_1}^{T_2} \mathbf{1}_C(X_u) du}\Big]^{k-1} =: \mathcal{M}(r)^{k-1},
\end{align*}
where we used that, setting $\mathcal{F}_t = \sigma(X_s, s \le t)$ for all $t \ge 0$, $T_1$ is $\mathcal{F}_0$ measurable, the $\mathcal{F}_0$ random variable $e^{-r \int_0^{T_1} \mathbf{1}_C(X_u) du }$ is smaller than $1$ by positivity, and the sequence $(\zeta_i)_{i \ge 1}$ is i.i.d.. Note that the value $\mathcal{M}(r)$ is independent of $x$, since it is independent of $\mathcal{F}_{T_1-}$, and that $0 < \mathcal{M}(r) < 1$ for all $r > 0$. More precisely, since $X_{T_1} = 0$ $\PP_x$-almost surely, using the strong Markov property 
\[ \mathcal{M}(r) = \EE_0 \big[ e^{-r \int_0^{\zeta_1} \mathbf{1}_C(X_u) du} \big] = \EE \big[e^{-r (1 \wedge \zeta_1)}\big]. \]

Let $n =  [s^\eta] $, where $[y]$ denotes the integer part of $y \in \RR$, and where $\eta$ to be chosen lies in $(0,1)$. In the sequel, we omit the dependency of $n$ in $s$ for the sake of clarity. We have
\begin{align*}
 \EE_x \Big[\int_1^{\infty} e^{-r\int_0^s \mathbf{1}_C(X_u) du} (s+1)^{\alpha-1} ds \Big] &=  \EE_x \Big[\int_1^{\infty} e^{-r\int_0^{T_n} \mathbf{1}_C(X_u) du} e^{r \int_s^{T_n} \mathbf{1}_C(X_u) du} (s+1)^{\alpha-1} ds \Big] \\
 &=
 \int_1^{\infty} (s+1)^{\alpha -1} \EE_x\Big[ e^{-r\int_0^{T_n} \mathbf{1}_C(X_u) du} e^{r \int_s^{T_n} \mathbf{1}_C(X_u) du} \Big] ds,
\end{align*}
where we used Tonelli's theorem.
We now split the quantity $ e^{r \int_s^{T_n} \mathbf{1}_C(X_u) du}$ into three parts, recalling that $T_1$ is $\mathcal{F}_0$-measurable. We have, for $\gamma > 0$ to be chosen later, $\PP_x$ almost surely for all $x \in \RR_+$,
\begin{align*}
 e^{r \int_s^{T_n} \mathbf{1}_C(X_u) du} &= e^{r \int_s^{T_n} \mathbf{1}_C(X_u) du} \mathbf{1}_{\{T_n \le s \}} + e^{r \int_s^{T_n} \mathbf{1}_C(X_u) du} \mathbf{1}_{\{0 \le T_n - s \le \gamma + T_1 \}} \\
 &\qquad + e^{r \int_s^{T_n} \mathbf{1}_C(X_u) du} \mathbf{1}_{\{T_n - s \ge \gamma + T_1 \}} \\
 &\le 1 + e^{r(\gamma+T_1)} +  e^{r \int_s^{T_n} \mathbf{1}_C(X_u) du} \mathbf{1}_{\{T_n - s \ge \gamma+ T_1 \}}.
\end{align*}
Hence 
\begin{align*}
 \EE_x \Big[\int_1^{\infty} e^{-r\int_0^s \mathbf{1}_C(X_u) du} (s+1)^{\alpha-1} ds \Big] \le I_1 + I_2,
\end{align*}
where, noting that $T_1 = X_{T_0} + T_0 = x$ on $\{X_0 = x\}$, 
\[ I_1 :=  (1 + e^{r\gamma + rx}) \int_1^{\infty} (s+1)^{\alpha - 1} \EE_x\Big[e^{-r\int_0^{T_n} \mathbf{1}_C(X_u) du}\Big] ds, \]
and 
\[ I_2 := \int_1^{\infty} (s+1)^{\alpha - 1} \EE_x\Big[e^{-r\int_0^{T_n} \mathbf{1}_C(X_u) du}  e^{r \int_s^{T_n} \mathbf{1}_C(X_u) du} \mathbf{1}_{\{T_n - s \ge \gamma + T_1\}}\Big] ds. \]
For $I_1$, we immediatly obtain
\begin{align}
\label{eq:controlI_1}
I_1 \le (1 + e^{r(\gamma+x)}) \int_1^{\infty} (s+1)^{\alpha - 1} \mathcal{M}(r)^{[s^\eta] - 1} ds &\le \frac{1 + e^{r(\gamma+x)}}{\mathcal{M}(r)^2} \int_1^{\infty} (s+1)^{\alpha-1} \mathcal{M}(r)^{s^\eta} ds \\
&< \infty, \nonumber
\end{align}
where we used that $[s^\eta] + 1 \ge s^\eta$ and that $\mathcal{M}(r) < 1$. On the other hand, on $\{T_n - s \ge \gamma+T_1\}$, since $\gamma > 0$, 
\[ -\int_0^{T_n} \mathbf{1}_C(X_u) du + \int_s^{T_n} \mathbf{1}_C(X_u) du = - \int_0^s \mathbf{1}_C(X_u) du \le 0, \]
hence,
\[ I_2 \le \int_1^{\infty} (s+1)^{\alpha - 1} \PP_x(T_n-T_1  \ge \gamma + s). \]
We now claim that 
\begin{align}
\label{eq:momentT_n}
\EE_x\big[(T_n - T_1)^{\alpha + \e}\big] \le K n^{c + 1},
\end{align} 
for two constants $K, c > 0$ independent of $n$, so that, recalling $n = [s^\eta] \le s^\eta$, an application of Markov's inequality leads to 
\begin{align}
\label{eq:controlI2}
I_2 \le K \int_1^{\infty} \frac{(s+1)^{\alpha-1 + \eta (c+1)}}{(\gamma + s)^{\alpha + \e}} ds.
\end{align}
With the choice $\eta = \tfrac{\e}{2(c+1)}$, we obtain that
\begin{align*}
I_2 < \infty. 
\end{align*}
from which the conclusion of the proof follows. 
It only remains to prove \eqref{eq:momentT_n}. This follows by noting first that $T_{k+1} - T_k = \zeta_k$ for all $k \ge 1$, with $(\zeta_k)_{k \ge 1}$ i.i.d. random variables, such that $\EE[\zeta_1^{\alpha + \e}] < \infty$ by hypothesis, and also that, by properties of polynomials, there exists $K > 0$ depending only on $\alpha + \e$ such that for all $x, y \ge 0$, $(x+y)^{\alpha + \e} \le K (x ^{\alpha +\e} + y^{\alpha + \e} )$. With this at hand, the claim \eqref{eq:momentT_n} is completed by applying the same computation as in \cite[Remark 26]{bernou_fournier_collisionless_2019}.

Combining \eqref{eq:controlI_1} and \eqref{eq:controlI2}, we obtain \eqref{eq:Hpi_ex_3}, the uniform control on $C$ being a consequence of the fact that $T_1 = x \le 1$ for all $x \in C$. Gathering \eqref{eq:Hpi_ex_1} and \eqref{eq:Hpi_ex_3} leads to the desired result. 

\subsubsection{Proof that \hyperref[C3]{(C3)} of Theorem \ref{MainThm} can not be satisfied}

We prove that a strong version of (C3), in which $V$ is either non-decreasing or non-monotonous outside every compact set, can not be satisfied. It might still be the case that the condition can be satisfied for some $V$ and non-decreasing outside some compact set. Finding a corresponding set of parameters $(V,\kappa, C, \phi)$, or proving that none of those exists, is an interesting problem that we plan to address more deeply in the future, since it will bring a definite conclusion to whether (C3) can be satisfied for this model. We prove the following:

\begin{prop}
Assume that there exists $\eta > 0$ such that $[0,\eta) \subset \mathrm{supp}(f_{\zeta})^c$, where $f_{\zeta}$ is the density of $\zeta_1$ with respect to the Lebesgue measure. 
\begin{enumerate}
\item There does not exist a function $V: \RR_+ \to [1,\infty)$, continuous, increasing, such that for some compact set $C$, for some constant $\kappa > 0$, 
\begin{align}
\label{eq:Old_Lyap}
 \L V(x) \le - \phi \circ V(x) + \kappa \mathbf{1}_C(x). 
 \end{align}
\item There does not exist a function $V: \RR_+ \to [1,\infty)$ continuous with pre-compact sublevel sets such that $V$ is non-monotonuous outside every interval of the form $[0,M]$ for some $M > 0$ and such that, for some compact set $C$, for some constant $\kappa >0$, \ref{eq:Old_Lyap} holds.
 \end{enumerate}
\end{prop}

\begin{proof}
Recall that, in the sense of the extended generator, \eqref{eq:Old_Lyap} is equivalent to the fact that the process
\[ M_t = V(X_t) - V(x) - \kappa \int_0^t \mathbf{1}_C(X_u) du + \int_0^t \phi(V(X_u)) du, \]
is a $\PP_x$-local supermartingale. Let $C$ be a compact set, $\kappa > 0$ be a constant, and $\phi$ be any positive function from $[1,\infty)$ to itself. 

\begin{enumerate}
\item Let $s > 0$, $\delta \in (0,\eta)$ to be chosen and consider $x = s$, $t = s + \delta$. Then, according to the dynamic, we have
\begin{align*}
&\EE_x \big[V(X_t)\big] - V(x) - \kappa \EE_x \Big[\int_{0}^{t} \mathbf{1}_C(X_u) du\Big] + \EE_x\Big[\int_0^t \phi(V(X_u)) du \Big] \\
&\qquad \qquad \ge \EE_x \big[V(\zeta_1 - \delta)\big] - V(s) - \kappa (s + \delta)
\end{align*}
where we used that $\mathbf{1}_C(X_u) \le 1$ for all $u \in [0,t]$, and the hypothesis on the support of $f_{\zeta}$ as well as the dynamics of $(X_t)_{t \ge 0}$. For $s$ small enough, so that $s \le \tfrac{\eta}{4}$ and $\kappa s \le \tfrac{V(\tfrac{\eta}{2}) - V(\tfrac{\eta}{4})}{4}$, this last quantity being positive by hypothesis on $V$, we find
\begin{align*}
&\EE_x \big[V(X_t) \big] - V(x) - \kappa \EE_x \Big[\int_{0}^{t} \mathbf{1}_C(X_u) du\Big] + \EE_x \Big[\int_0^t \phi(V(X_u)) du \Big] \\
&\qquad \qquad \ge \EE_x \big[V(\zeta_1 - \delta) \big] - V(\tfrac{\eta}{4}) - \kappa \delta - \tfrac{V(\tfrac{\eta}{2}) - V(\tfrac{\eta}{4})}{4}
\end{align*}
and since $\zeta_1 \ge \eta$ almost surely, we have,
\begin{align*}
&\EE_x \big[V(X_t) \big] - V(x) - \kappa \EE_x \Big[\int_{0}^{t} \mathbf{1}_C(X_u) du\Big] + \EE_x\Big[\int_0^t \phi(V(X_u)) du\Big] \\
&\qquad \qquad \ge \EE_x \big[V(\eta - \delta) \big] - V(\tfrac{\eta}{4}) - \kappa \delta - \tfrac{V(\tfrac{\eta}{2}) - V(\tfrac{\eta}{4})}{4}.
\end{align*}
Choosing $\delta \le \tfrac{\eta}{2}$ small enough so that $\kappa \delta \le \tfrac{V(\tfrac{\eta}{2}) - V(\tfrac{\eta}{4})}{4}$, we find 
\begin{align*}
&\EE_x \big[V(X_t) \big] - V(x) - \kappa \EE_x \Big[\int_{0}^{t} \mathbf{1}_C(X_u) du\Big] + \EE_x \Big[\int_0^t \phi(V(X_u)) du \Big] \\
&\qquad \qquad  \ge \tfrac{V(\tfrac{\eta}{2}) - V(\tfrac{\eta}{4})}{2} > 0,
\end{align*}
where we used that $V$ is increasing. This computation holds for all $\kappa > 0$, all choice of function $\phi$ satisfying the hypothesis of Theorem \ref{MainThm} and all choice of compact set $C$, hence the result.

\item \bb Since $C$ is compact, there exists $M > 0$ such that $C \subset [0,M]$. By assumption, $V$ is non-monotonuous on $[0,M]^c$, i.e. there exists $M < y < z$ such that $V(y) > V(z)$. Let $x = z$, $s = z-y$, we have
\begin{align*}
M_{s} = V(y) - V(z) + \int_0^{z-y} \phi \circ V(X_u) du, 
\end{align*}
and it easily follows that $M_s > 0$ a.s., hence \eqref{eq:Old_Lyap} can not be satisfied. \qedhere
\end{enumerate}
\end{proof}

\subsection{Compound Poisson-process driven Ornstein-Uhlenbeck process.} 

We show how to apply \hyperref[C2]{(C2)} from Theorem \ref{MainThm} to this example, which is a classical one for the application of \eqref{ConditionLyapSubGeom}, see for instance \cite{FortRoberts, DFG}. Let $Y$ be an Ornstein-Uhlenbeck process driven by a finite rate subordinator:
\begin{align}
\label{eq:Ornstein_basic}
dY_t = - \mu Y_t dt + d Z_t, 
\end{align}
where $Z_t := \sum_{i=1}^{N_t} W_i$ with $(W_i)_{i \ge 1}$ an independent and identically distributed collection of random variables from probability measure $F$ and $(N_t)_{t \ge 0}$ is a Poisson-process of finite rate $\lambda$, independent of the collection $(W_i)_{i \ge 1}$ and $\mu$ is a positive constant. Write $(\P_t)_{t \ge 0}$ for the associated semigroup, $\L$ for the associated operator. Those processes are used as storage models (see for instance \cite{Lund_1996}), as well as for modeling the stochastic volatility (see the seminal paper \cite{Barndorff_Nielsen_2001}). We consider here the non geometrically ergodic case on which the condition corresponding to \eqref{ConditionLyapSubGeom} is applied in Fort-Roberts \cite{FortRoberts}. For this, we assume that $F(\cdot)$ is extremely heavy-tailed while still preserving the positive Harris-recurrence. For $G$ defined by $G(A) = F(e^A)$, assuming that for all $\kappa > 0$, $\int e^{\kappa x} G(dx) = + \infty$, it is proven in \cite[Lemma 17]{FortRoberts} that the process fails to be exponentially ergodic. 
We consider the polynomial case, with the paradigmatic example
\begin{align*}
F(dx) &= \frac{dx}{x^k} \mathbf{1}_{\{x \ge 1\}} 
\end{align*}
with $k > 2$ in mind. 

With a slightly stronger hypothesis (assuming a moment of order $1$ rather than only some log-moment), which allows a clearer proof, we now recover a result of the same taste as those of \cite{FortRoberts} using our new condition.   

\begin{lemma}
Consider the process $(Y_t)_{t \ge 0}$ solving \eqref{eq:Ornstein_basic}. Assume that $\EE[W_1] < \infty$. Then
\begin{align*}
\lim \limits_{t \to \infty} t^{r-1} \| \P^t(x,\cdot) - \pi(\cdot)\|_{TV} = 0. 
\end{align*}
\end{lemma}

\begin{proof}
Note that, for differentiable functions, the strong generator $\L$ of $(Y_t)_{t \ge 0}$ coincides with the weak one and is given by
\[ \L V(x) = \int_0^{\infty} \big(V(x+u) - V(x)\big) \lambda F(du) - \mu x V'(x). \]
Set, for all $t$ and $x$ large enough,
\[ \psi(t,x) = 2 \big(\ln(x) + t \mu \big)^r - \Big(\frac{t}{r} + 1 \Big)^r. \]
Clearly,
\begin{align*}
\partial_t \psi(t,x) = 2 r \mu \big(\ln(x) + t \mu \big)^{r-1} - \Big(\frac{t}{r}+1 \Big)^{r-1}. 
\end{align*} 
Moreover,
\begin{align*}
- \mu x \partial_x \psi(t,x) = - 2 \mu r \big(\ln(x) + t \mu \big)^{r-1},
\end{align*}
hence
\begin{align*}
\big( \partial_t - \mu x \partial_x \big) \psi(t,x) =  - \Big(\frac{t}{r}+1 \Big)^{r-1}. 
\end{align*}
Since $(\ln(x)+t)^r$ is concave in $x$ for $x,t$ large enough, we also have, for some positive constant $c > 0$
\begin{align*}
\int_0^{\infty} \big(\psi(t,x+u) - \psi(t,x) \big) F(du) \le c \int_0^{\infty} (\ln(x) + t \mu)^{r-1} \frac{ur}{x} F(du). 
\end{align*}
Overall, we find
\begin{align*}
(\partial_t + \L) \psi(t,x) \le cr \EE[W_1] \frac{(\ln(x) + t\mu)^{r-1}}{x} - \Big(\frac{t}{r} + 1\Big)^{r-1}. 
\end{align*}
Since all bounded sets are petite in this example, we conclude to the desired result with the choice $\phi(s) = s^{\frac{r-1}{r}}$.  
\end{proof}

It is worth noticing that the computation performed in Douc-Fort-Guillin \cite[Section 5.4]{DFG} with the same process, and which allows to derive some different subgeometric rates under suitable hypothesis on $F$, can also be adapted to fit our framework. This relies on the introduction of $\psi$ in the form 
\[ \psi(t,x) = 2 \Hpi (H_{\phi}(V(x)) + t) - \Hpi(t), \]
where $V$ and $\phi$ are the functions used in the application of \eqref{ConditionLyapSubGeom} in \cite{DFG}. We omit the tedious verification for the sake of simplicity.

\dd 
\bibliographystyle{alpha}
\bibliography{biblio}
\end{document}